\newcommand\reallywidehat[1]{%
\savestack{\tmpbox}{\stretchto{%
  \scaleto{%
    \scalerel*[\widthof{\ensuremath{#1}}]{\kern.1pt\mathchar"0362\kern.1pt}%
    {\rule{0ex}{\textheight}}
  }{\textheight}%
}{2.4ex}}%
\stackon[-6.9pt]{#1}{\tmpbox}%
}
\newtheorem{theorem}{Theorem} 
\newtheorem{lemma}[theorem]{Lemma} 
\newtheorem{proposition}[theorem]{Proposition} 
\newtheorem{remark}[theorem]{Remark} 
\newtheorem{corollary}[theorem]{Corollary}
\newcommand{\R}{{\mathbb R}} 
\newcommand{\C}{{\mathbb C}} 
\newcommand{\N}{{\mathbb N}}
\renewcommand{\H}{{\mathbb H}}
\newcommand{\E}{{\mathbb E}}
\newcommand{\dd}{{\rm d}}
\newcommand{\ii}{\ensuremath{\mathrm{i}}}
\newcommand{\e}{\mathrm{e}}
\DeclareMathOperator{\ReT}{Re}
\DeclareMathOperator{\ImT}{Im}
\newcommand{\abs}[1]{\left\vert#1\right\vert}
\providecommand{\norm}[1]{\left\lVert#1\right\rVert}
\title[]{Strong rates of convergence of a splitting scheme for Schr\"{o}dinger equations with 
nonlocal interaction cubic nonlinearity and white noise dispersion 
}
\date{\today}
 \author{Charles-Edouard Br\'ehier}
 \address{Univ Lyon, Université Claude Bernard Lyon 1, CNRS UMR 5208, Institut Camille Jordan, 43 blvd. du 11 novembre 1918, F--69622 Villeurbanne cedex, France}
\email{brehier@math.univ-lyon1.fr}
\author{David Cohen}
\address{Department of Mathematical Sciences, Chalmers University of Technology and University of
Gothenburg, SE--41296 Gothenburg, Sweden}
\email{david.cohen@chalmers.se}
\begin{document}

\begin{abstract}
We analyse a splitting integrator for the time discretization of the Schr\"odinger 
equation with nonlocal interaction cubic nonlinearity 
and white noise dispersion. We prove that this time integrator has order of convergence one in the $p$-th mean sense, 
for any $p\geq1$ in some Sobolev spaces.   
We prove that the splitting schemes preserves the $L^2$-norm, which is a crucial property for the proof of the 
strong convergence result. 
Finally, numerical experiments 
illustrate the performance of the proposed numerical scheme. 
\end{abstract}

\maketitle
{\small\noindent 
{\bf AMS Classification.} 65C30. 65J08. 60H15. 60H35. 60-08. 35Q55

\bigskip\noindent{\bf Keywords.} Stochastic partial differential equations. 
Stochastic Schr\"odinger equations. White noise dispersion. 
Nonlocal interaction cubic nonlinearity. Locally Lipschitz nonlinearity. 
Geometric numerical integration. Splitting integrators. 
Strong convergence rates. 

\section{Introduction}\label{intro}

We consider the time discretization by a splitting scheme for the following class of nonlinear Schr\"odinger equations with white noise dispersion 
\begin{align}\label{prob}
\begin{split}
\ii\dd u(t)+\Delta u(t)\circ\dd \beta(t)+V[u(t)]u(t)\,\dd t&=0\\
u(0)&=u_0,
\end{split}
\end{align}
where the unknown $u=u(t,\cdot)$, with $t\geq0$, 
is a complex valued random process defined on $\R^d$,  
$\Delta u=\displaystyle\sum_{j=1}^d\frac{\partial^2 u}{\partial x_j^2}$ denotes the Laplacian in $\R^d$, 
and $\beta=\beta(t)$ is a real-valued standard Brownian motion. 
The nonlinearity $\Psi_0(u)=V[u]u$ in the 
Stochastic Partial Differential Equation (SPDE)~\eqref{prob}   
is a nonlocal interaction cubic nonlinearity, $V[u]=V\star|u|^2=\displaystyle\int V(\cdot-x)|u(x)|^2\,\dd x$, 
where $\star$ denotes the convolution operator and the real-valued mapping $V\colon\R^d\to\R$ is at 
least continuous and bounded, more precise regularity conditions are imposed below. 
Such long-range interaction is a smooth version of the nonlinearity in the 
(deterministic) Schr\"odinger--Poisson equation, see for instance \cite{MR2429878}. 
Splitting schemes for Schr\"odinger equations driven by additive space-time noise 
with this type of nonlinearities were recently studied in \cite{bc20}. 
Observe that, the case of power-law nonlinearities 
cannot be treated by the techniques employed in the present publication.
The SPDE~\eqref{prob} is understood in the Stratonovich sense, 
using the $\circ$ symbol for the Stratonovich product.

Theoretical results on well-posedness of the SPDE \eqref{prob} are relatively scarce 
and given mostly for the case of a power-law nonlinearity ($|u|^{2\sigma}u$ for $\sigma$ a positive real number) 
in place of the nonlocal interaction nonlinearities considered in this article. 
For instance, it has been shown that SPDEs of the type~\eqref{prob} with power-law nonlinearities have solutions in $H^1$ 
for dimension $d=1$ and $\sigma=2$, \cite[Theorem 2.2]{MR2652190}, and for $\sigma<2/d$ in any dimension, 
\cite[Theorem 2.3]{MR2832639}. 

To the best of our knowledge, no strong convergence rates are know for a time discretization 
of the SPDE \eqref{prob} with the considered type of locally Lipschitz nonlinearity. 
However, strong convergence results have been proved in  the 
case of a globally Lipschitz nonlinearity in place of the above nonlinearity. 
In addition, rates of convergence in probability for a pure cubic nonlinearity in place 
of the above nonlocal interaction cubic nonlinearity have also been obtained. 
We now review these known convergence results. The work \cite{m06} studies a Lie--Trotter splitting integrator.  
The mean-square order of convergence of this explicit numerical method is proven 
to be at least $1/2$ for a (truncated) Lipschitz nonlinearity \cite[Sect. 5 and 6]{m06}. 
Furthermore, \cite{m06} conjectures that this splitting 
scheme should have strong order one, and supports this conjecture numerically. 
Sharp order estimates for the same splitting scheme (but applied to a more general problem) 
were recently presented in the preprint \cite{marty:hal-02314915}.  
The authors of \cite{MR3312594} study a semi-implicit Crank--Nicolson scheme. 
In particular, they show that this time integrator has mean-square order of convergence 
one for a truncated problem and order of convergence in probability one in the case of a cubic nonlinearity. 
For the same problem, the same convergence rates are obtained for a multi-symplectic integrator in \cite{MR3649275} 
and for an explicit exponential scheme in \cite{MR3736655}. 
We conclude this list of references with the recent work \cite{hks20} 
which considers a randomized exponential integrator for time discretization of 
related (non-random) nonlinear modulated Schr\"odinger equations.

In the present publication, we consider an explicit splitting integrator for 
an efficient time discretization of the nonlinear stochastic Schr\"odinger 
equation \eqref{prob}. In essence, the main principle of splitting integrators is to decompose the vector field of the 
original differential equation in several parts, such that the arising subsystems are exactly (or easily) 
integrated. We refer interested readers to \cite{MR2840298,MR3642447,MR2009376} for details 
on splitting schemes for ordinary (partial) differential equations. 
The splitting scheme considered in this publication is given by 
\[
u_{n+1}=\e^{\ii(\beta(t_{n+1})-\beta(t_n))\Delta}\bigl(\e^{\ii\tau V[u_n]}u_n\bigr),
\]
where $\tau$ denotes the time-step size, $t_n=n\tau$, and $u_n\approx u(t_n)$, 
see Equation~\eqref{split} below for details.

The main result of this paper is a strong convergence result for the explicit and easy to implement splitting integrator for the time discretization of \eqref{prob} defined above, see Section~\ref{sec-num} for a precise statement. 
Note that the nonlocal interaction cubic nonlinearity is only locally Lipschitz continuous in the Sobolev spaces $H^m(\R^d,\C)$, where $m=0,1,2,\ldots$, and not globally Lipschitz as in the above references, see Section~\ref{sec-not} for details. 
Theorem~\ref{th-ms} states that the splitting scheme converges with order $1$ in the $L^p(\Omega,H^m(\R^d,\C))$ sense, 
for all $p\in[1,\infty)$. One also obtains convergence with order $1$ in probability and in the almost sure sense. 
A crucial property for showing these results is the fact that the splitting scheme exactly 
preserves the $L^2$-norm as does the exact solution to \eqref{prob}, see Proposition~\ref{prop-ex} 
and Proposition~\ref{prop-norm2}.
To the best of our knowledge, this is the first strong convergence result obtained 
for a time discretization scheme applied to the nonlinear Schr\"odinger equation 
with white noise dispersion with a non-globally Lipschitz continuous nonlinearity.

In order to show such convergence results, we begin the exposition by introducing some 
notations and recalling useful results in Section~\ref{sec-not}. 
Section~\ref{sec-ex} then provides various properties of the exact solution to the SPDE \eqref{prob}. 
After that, we present the splitting scheme and analyse its strong convergence in Section~\ref{sec-num}. 
The proof of the main convergence result is given in Section~\ref{sec-error}. 
Several numerical experiments illustrating the main properties of the proposed numerical scheme are presented 
in Section~\ref{sec-numexp}. 

Throughout this article, we denote by $C$ a generic constant 
that may vary from line to line. Furthermore, we set 
$\N=\{1,2,\ldots\}$ and $\N_0=\{0,1,\ldots\}$. Finally, the initial 
value $u_0$ of the SPDE \eqref{prob} is assumed to be non-random for ease of 
presentation. The results of this paper can be extended to the case of random 
$u_0$ (independent of the given Brownian motion and with appropriate moment bounds).

\section{Setting and useful results}\label{sec-not}

We denote the classical Lebesgue space of complex functions by
$L^2=L^2(\R^d,\C)$, endowed with its real vector space structure,
and with the inner product 
$$
(u,v)=\ReT\int_{\R^d}u\bar v\,\dd x=\ReT\int u\bar v\,\dd x
$$
as well as its norm denoted by $\norm{\cdot}_{L^2}$. 
For $m\in\N$, we further denote by $H^m=H^m(\R^d,\C)$ the Sobolev space of functions in $L^2$, with weak derivatives of order $1,\ldots,m$ in $L^2$. The Fourier transform of a 
tempered distribution $v$ is denoted by $\widehat v$. 
With this notation, $H^m$ is the 
Sobolev space of tempered distributions $v$ such that $(1+\abs{\zeta}^2)^{m/2}\widehat v\in L^2$. 
The Sobolev space $H^m$ is equipped with the norm defined by
\[
\norm{v}_{H^m}^2=\sum_{|\alpha|\le m}\norm{\partial_\alpha v}_{L^2}^2,
\]
where $\alpha=\left(\alpha_1,\ldots,\alpha_d\right)\in\N_0^d$ is a multi-index and $|\alpha|=\displaystyle\sum_{i=1}^{d}\alpha_i$. Note that, if $m_1\le m_2$, one has $H^{m_1}\subset H^{m_2}$ and $\norm{v}_{H^{m_1}}\le \norm{v}_{H^{m_2}}$ for all $v\in H^{m_2}$. If $\alpha$ and $\gamma$ are two multi-indices, it is said that $\gamma\le \alpha$ if $\gamma_i\le \alpha_i$ for all $i=1,\ldots,d$. If $\gamma\le\alpha$, 
we also introduce the notation $\alpha-\gamma=\displaystyle\left(\alpha_i-\gamma_i\right)_{1\le i\le d}$ and  
$\displaystyle\binom{\alpha}{\gamma}=\prod_{i=1}^{d}\binom{\alpha_i}{\gamma_i}$. 

The Banach space $\mathcal C^m=\mathcal C^m(\R^d,\C)$ of complex-valued functions of class $\mathcal{C}^m$ is equipped with the norm 
\[
\|v\|_{\mathcal{C}^m}=\underset{|\alpha|\le m}\sup~\underset{x\in\R^d}\sup~|\partial_\alpha v(x)|.
\]

Let us recall a version of the Leibniz rule. See for instance \cite[Theorem 1, Sect. 5.2.3]{MR2597943} for a proof in the case of smooth compactly supported functions $v\in\mathcal{C}_c^\infty$, and \cite[Theorem 8.25]{MR2987297} for the argument to extend the result to  $v\in\mathcal{C}^m$.

\begin{lemma}[Leibniz rule]\label{lem:Leibniz}
For all $m\in\N$, there exists $C_m\in(0,\infty)$ such that for all $u\in H^m$ and $v\in\mathcal{C}^m$, one has $uv\in H^m$, and
\[
\norm{uv}_{H^m}\le C_m\norm{u}_{H^m}\norm{v}_{\mathcal{C}^m}.
\]
In addition, Leibniz rule holds: for all $\alpha\in\N_0^d$ with $|\alpha|\le m$, one has
\[
\partial_\alpha(uv)=\sum_{\gamma\le \alpha}\binom{\alpha}{\gamma}\partial_\gamma u\partial_{\alpha-\gamma}v.
\]
\end{lemma}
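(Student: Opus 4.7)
The plan is to establish both the identity and the norm bound first for smooth functions and then pass to the general case by mollification and truncation. The Leibniz identity is a purely algebraic fact for smooth functions, and the norm bound is an immediate consequence of it combined with $\|\partial_\gamma u\|_{L^2} \le \|u\|_{H^m}$ and $\|\partial_{\alpha-\gamma} v\|_{L^\infty} \le \|v\|_{\mathcal{C}^m}$; the real content therefore lies in justifying the limits.

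For $u \in \mathcal{C}_c^\infty(\R^d,\C)$ and $v \in \mathcal{C}^\infty(\R^d,\C)$, I would prove $\partial_\alpha(uv) = \sum_{\gamma \le \alpha}\binom{\alpha}{\gamma}\partial_\gamma u \, \partial_{\alpha-\gamma} v$ by induction on $|\alpha|$, starting from the one-variable product rule and using Pascal's identity for multi-index binomial coefficients. Taking the $L^2$ norm of each summand then yields $\|\partial_\alpha(uv)\|_{L^2} \le 2^{|\alpha|}\|u\|_{H^m}\|v\|_{\mathcal{C}^m}$ for $|\alpha|\le m$, and summing over $|\alpha| \le m$ produces the desired estimate with an explicit constant $C_m$ in the smooth compactly supported setting.

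To extend to $u \in H^m$ and $v \in \mathcal{C}^m$, I would approximate by mollification. Let $\rho_\varepsilon$ be a standard mollifier and set $u_\varepsilon = u \star \rho_\varepsilon$, $v_\varepsilon = v \star \rho_\varepsilon$, so that $u_\varepsilon \in \mathcal{C}^\infty \cap H^m$ with $u_\varepsilon \to u$ in $H^m$, while $v_\varepsilon \in \mathcal{C}^\infty$ satisfies $\|v_\varepsilon\|_{\mathcal{C}^m} \le \|v\|_{\mathcal{C}^m}$ and $\partial_\gamma v_\varepsilon \to \partial_\gamma v$ locally uniformly for every $|\gamma| \le m$. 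Multiplying $u_\varepsilon$ by a cutoff $\chi_R \in \mathcal{C}_c^\infty$ brings the situation into the smooth compactly supported case treated above, giving $\|\chi_R u_\varepsilon \cdot v_\varepsilon\|_{H^m} \le C_m \|\chi_R u_\varepsilon\|_{H^m}\|v\|_{\mathcal{C}^m}$. Passing to the limit $R \to \infty$ and then $\varepsilon \to 0$, combined with the fact that $u_\varepsilon v_\varepsilon \to uv$ in $L^2$ (split into a ball and its complement, using local uniform convergence of $v_\varepsilon$ and the $L^2$ tail bound for $u$) and with weak lower semicontinuity of the $H^m$ norm, yields $uv \in H^m$ with the stated estimate.

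Finally, to obtain the Leibniz identity in the general case, I would pass to the limit in the smooth identity applied to $(\chi_R u_\varepsilon, v_\varepsilon)$ after testing against an arbitrary $\varphi \in \mathcal{C}_c^\infty$. The main obstacle is that $v$ is only bounded with no global decay, so no global $L^2$ convergence of the products on the right-hand side is directly available; pairing against a compactly supported $\varphi$ localizes both factors and reduces the problem to combining $L^2$ convergence of $\partial_\gamma u_\varepsilon$ on compact sets with the locally uniform convergence of $\partial_{\alpha-\gamma} v_\varepsilon$. Since each summand on the right-hand side of the Leibniz formula lies in $L^2$ by the bound already established, the identity holds as an equality in $L^2$, and in particular almost everywhere, completing the proof.
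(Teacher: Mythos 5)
Your proof is correct and follows the same route the paper implicitly relies on: the paper gives no proof of its own but cites Evans for the smooth compactly supported case and Bressan for the extension to $v\in\mathcal{C}^m$, which is exactly the induction-plus-mollification-and-cutoff argument you carry out in full. All the limit passages you flag (uniform bound $\norm{v_\varepsilon}_{\mathcal{C}^m}\le\norm{v}_{\mathcal{C}^m}$, $L^2$ convergence of the products, weak lower semicontinuity of the $H^m$ norm, and testing the identity against $\varphi\in\mathcal{C}_c^\infty$) are handled correctly.
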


Let $\beta=\bigl(\beta(t)\bigr)_{t\ge 0}$ be a standard real-valued Brownian Motion defined on a filtered probability space $(\Omega,\mathcal{F},\mathbb{P},\{\mathcal{F}_t\}_{t\geq 0})$ satisfying the usual conditions.

For all $t,s\ge 0$, define the operator $S(t,s)$ as follows:
\begin{equation}\label{defS}
S(t,s)=\e^{\ii(\beta(t)-\beta(s))\Delta}.
\end{equation}
In Fourier variables, one has the expression
$$
\reallywidehat{S(t,s)v}(\zeta)=\exp\left(-\ii\abs{\zeta}^2(\beta(t)-\beta(s))\right)\widehat v(\zeta),
$$
for all $t,s\ge 0$, all $\zeta\in\R^d$ and any $v\in H^m$, $m\in\N_0$. 

The operators $S(t,s)$ for $t\ge s$ play an important role in this work: if $v$ is a $\mathcal{F}_s$-measurable random function with values in $H^{m}$, then $t\mapsto v_s(t)=S(t,s)v$ is the solution of the stochastic linear Schr\"odinger equation
\[
\ii\dd v_s(t)+\Delta v_s(t)\circ \dd\beta(t)=0,\quad t\ge s,
\]
with $v_s(s)=v$.

Two properties of the operators $S(t,s)$ will be used repeatedly in this article. 
\noindent First, for all $m\in\N_0$, all $t,s\ge 0$ and all $v\in H^m$, one has the isometry property
\begin{equation}\label{preservS}
\norm{S(t,s)v}_{H^m}=\norm{v}_{H^m}.
\end{equation}
\noindent Second, for all $m\in\N_0$, all $t,s\ge 0$ and all $v\in H^{m+2}$, one has
\begin{equation}\label{arnaud0}
\norm{S(t,s)v-v}_{H^m}\le |\beta(t)-\beta(s)|\norm{v}_{H^{m+2}}.
\end{equation}

Let us now study properties of the nonlinearity in the SPDE \eqref{prob} defined by
\begin{equation*}
\Psi_0(u)=V[u]u=\bigl(V\ast|u|^2\bigr)u.
\end{equation*}

If $V\in\mathcal{C}^m$, the mapping $\Psi_0:H^m\to H^m$ is well-defined and is locally Lipschitz continuous. More precisely, one has the following result.

\begin{lemma}\label{lemmaNL}
Let $m\in\N_0$ and assume that $V\in\mathcal{C}^m$. There exists $C_m(V)\in(0,\infty)$ such that the following properties hold.

First, for all $u\in H^m$, $\Psi_0(u)\in H^m$ and one has 
\begin{equation}\label{eq:lemmaNL-bound}
\norm{\Psi_0(u)}_{H^m}\le C_m(V)\|u\|_{L^2}^2\|u\|_{H^m}.
\end{equation}
In addition, $\Psi_0$ is locally Lipschitz continuous in $H^m$: for all $u_1,u_2\in H^m$, one has
\begin{equation}\label{eq:lemmaNL-Lip}
\norm{\Psi_0(u_2)-\Psi_0(u_1)}_{H^m}\le C_m(V)\left(\norm{u_1}_{H^m}^2+\norm{u_2}_{H^m}^2\right)\norm{u_2-u_1}_{H^m}.
\end{equation}
Finally, $\Psi_0$ is twice differentiable, and its first and second order derivatives satisfy the following result: for all $u,h,k\in H^m$, one has
\begin{equation}\label{eq:lemmaNL-d1}
\norm{\Psi_0'(u).h}_{H^m}\le C_m(V)\norm{u}_{L^2}\norm{u}_{H^m}\norm{h}_{H^m}
\end{equation}
and
\begin{equation}\label{eq:lemmaNL-d2}
\norm{\Psi_0''(u).(h,k)}_{H^m}\le C_m(V)\norm{u}_{L^2}\norm{h}_{H^m}\norm{k}_{H^m}.
\end{equation}
\end{lemma}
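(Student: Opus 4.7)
The plan is to first extract a single workhorse estimate on the potential $V[u]=V\star|u|^2$, and then deduce all four bounds by combining it with the Leibniz rule (Lemma~\ref{lem:Leibniz}).

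First I would prove the auxiliary fact that convolution with $V\in\mathcal{C}^m$ maps $L^1(\R^d)$ into $\mathcal{C}^m(\R^d,\C)$ with
\[
\norm{V\star f}_{\mathcal{C}^m}\le \norm{V}_{\mathcal{C}^m}\norm{f}_{L^1},
\]
by differentiating under the integral sign (the derivative $\partial_\alpha(V\star f)=(\partial_\alpha V)\star f$ is justified by dominated convergence since $\partial_\alpha V\in L^\infty$), followed by an $L^\infty\!-\!L^1$ estimate. Applied to $f=|u|^2$, whose $L^1$ norm is $\norm{u}_{L^2}^2$, this gives
\[
\norm{V[u]}_{\mathcal{C}^m}\le\norm{V}_{\mathcal{C}^m}\norm{u}_{L^2}^2.
\]
Combining with Lemma~\ref{lem:Leibniz} applied to the product $V[u]\cdot u$ immediately yields \eqref{eq:lemmaNL-bound}.

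For the Lipschitz bound \eqref{eq:lemmaNL-Lip} I would use the telescoping identity
\[
\Psi_0(u_2)-\Psi_0(u_1)=V[u_2](u_2-u_1)+\bigl(V[u_2]-V[u_1]\bigr)u_1,
\]
together with $|u_2|^2-|u_1|^2=\bar u_2(u_2-u_1)+u_1\overline{(u_2-u_1)}$, which implies
\[
\norm{V[u_2]-V[u_1]}_{\mathcal{C}^m}\le\norm{V}_{\mathcal{C}^m}\bigl(\norm{u_1}_{L^2}+\norm{u_2}_{L^2}\bigr)\norm{u_2-u_1}_{L^2}.
\]
Applying Leibniz to each of the two pieces and using $\norm{\cdot}_{L^2}\le\norm{\cdot}_{H^m}$ concludes this step.

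For the derivative estimates \eqref{eq:lemmaNL-d1}--\eqref{eq:lemmaNL-d2}, I would view $\Psi_0$ as a smooth mapping on the real Banach space $H^m$ (with the underlying real inner product structure), so that $d|u|^2[h]=u\bar h+\bar u h=2\ReT(u\bar h)$. A direct computation then yields
\[
\Psi_0'(u).h=\bigl(V\star 2\ReT(u\bar h)\bigr)u+V[u]h,
\]
\[
\Psi_0''(u).(h,k)=\bigl(V\star 2\ReT(k\bar h)\bigr)u+\bigl(V\star 2\ReT(u\bar h)\bigr)k+\bigl(V\star 2\ReT(u\bar k)\bigr)h.
\]
Each summand has the form (smooth bounded function)$\times$(element of $H^m$), so the workhorse estimate controls the $\mathcal{C}^m$ norm of the convolution factor (by $\norm{V}_{\mathcal{C}^m}$ times a product of two $L^2$ norms), and Leibniz handles the product; the bounds \eqref{eq:lemmaNL-d1} and \eqref{eq:lemmaNL-d2} then follow after controlling $\norm{\cdot}_{L^2}$ by $\norm{\cdot}_{H^m}$ where needed.

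The only genuinely delicate point is the very first one: showing that $V[u]$ really belongs to $\mathcal{C}^m$ with the stated bound, since this is what unlocks Leibniz. Everything else is a careful but routine bookkeeping of three-linear and bilinear expressions coming from differentiating $|u|^2$ and the convolution product.
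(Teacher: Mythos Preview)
Your proposal is correct and follows essentially the same approach as the paper: both rest on the workhorse estimate $\norm{V\star f}_{\mathcal{C}^m}\le\norm{V}_{\mathcal{C}^m}\norm{f}_{L^1}$ combined with the Leibniz rule (Lemma~\ref{lem:Leibniz}), and your explicit formulas for $\Psi_0'$ and $\Psi_0''$ coincide with the paper's. The only minor difference is in~\eqref{eq:lemmaNL-Lip}: you telescope directly, whereas the paper writes $\Psi_0(u_2)-\Psi_0(u_1)=\int_0^1\Psi_0'\bigl((1-\xi)u_1+\xi u_2\bigr).(u_2-u_1)\,\dd\xi$ and then invokes the already-established bound~\eqref{eq:lemmaNL-d1}.
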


\begin{proof}[Proof of Lemma~\ref{lemmaNL}]
Let $m\in\N_0$ be fixed.

Using the definition of $\Psi_0$, Leibniz rule (Lemma~\ref{lem:Leibniz}) 
and the property $\norm{V[u]}_{\mathcal{C}^m}=\norm{V\ast|u|^2}_{\mathcal{C}^m}\le \norm{V}_{\mathcal{C}^m}\norm{u}_{L^2}^2$, 
the proof of~\eqref{eq:lemmaNL-bound} is straightforward: for all $u\in H^m$, one has
\[
\norm{\Psi_0(u)}_{H^m}=\norm{V[u]u}_{H^m}\le C_m\norm{V[u]}_{\mathcal{C}^m}\norm{u}_{H^m}\le C_m\norm{V}_{\mathcal{C}^m}\norm{u}_{L^2}^2\norm{u}_{H^m}.
\]
To prove~\eqref{eq:lemmaNL-d1} and~\eqref{eq:lemmaNL-d2}, note that the expressions for the derivatives are given by
\begin{align*}
\Psi_0'(u).h&=V[u]h+2V\ast\left(\ReT(\bar uh)\right)u\\
\Psi_0''(u).(h,k)&=4V\ast\left(\ReT(\bar kh)\right)u + 2V\ast\left(\ReT(\bar uk)\right)h+
2V\ast\left(\ReT(\bar uh)\right)k.
\end{align*}
Using Leibniz rule (Lemma~\ref{lem:Leibniz}) again, one obtains
\begin{align*}
\norm{\Psi_0'(u).h}_{H^m}&\le \norm{V[u]}_{\mathcal{C}^m}\norm{h}_{H^m}+2\norm{V\ast\left(\ReT(\bar uh)\right)}_{\mathcal{C}^m}\norm{u}_{H^m}\\
&\le \norm{V}_{\mathcal{C}^m}\norm{u}_{L^2}^2\norm{h}_{H^m}+2\norm{V}_{\mathcal{C}^m}\norm{u}_{L^2}\norm{h}_{L^2}\norm{u}_{H^m},
\end{align*}
and
\begin{align*}
\norm{\Psi_0''(u).(h,k)}_{H^m}&\le 4\norm{V\ast\left(\ReT(\bar kh)\right)}_{\mathcal{C}^m}\norm{u}_{H^m}+2\norm{V\ast\left(\ReT(\bar uk)\right)}_{\mathcal{C}^m}\norm{h}_{H^m}+2\norm{V\ast\left(\ReT(\bar uh)\right)}_{\mathcal{C}^m}\norm{k}_{H^m}\\
&\le 4\norm{V}_{\mathcal{C}^m}\left( \|u\|_{H^m}\|h\|_{L^2}\|k\|_{L^2}+
\|u\|_{L^2}\|h\|_{H^m}\|k\|_{L^2}+\|u\|_{L^2}\|h\|_{L^2}\|k\|_{H^m} \right).
\end{align*}

Finally, in order to prove~\eqref{eq:lemmaNL-Lip}, it suffices to write
\[
\Psi_0(u_2)-\Psi_0(u_1)=\int_0^1 \Psi_0'\bigl((1-\xi)u_1+\xi u_2\bigr).(u_2-u_1)\,\dd \xi,
\]
and to use~\eqref{eq:lemmaNL-d1}. One then obtains 
\[
\norm{\Psi_0(u_2)-\Psi_0(u_1)}_{H^m}\le C_m(V)(\norm{u_1}_{H^m}^2+\norm{u_2}_{H^m}^2)\norm{u_2-u_1}_{H^m}.
\]
This concludes the proof of Lemma~\ref{lemmaNL}.

\end{proof}

\section{Properties of the exact solution}\label{sec-ex}
In this section, we provide a well-posedness result and some properties of the exact solution $u(t)$ of the nonlinear Schr\"odinger equation with white noise dispersion \eqref{prob}. 

\begin{proposition}\label{prop-ex}
Assume that $V\in\mathcal{C}^0$.

For any (non-random) initial condition $u_0\in L^2$, 
there exists a unique mild solution $(u(t))_{t\ge0}$ of the 
Schr\"odinger with white noise dispersion \eqref{prob} in $L^2$, which means that for all $t\ge 0$ one has
\begin{align}\label{mild}
u(t)=S(t,0)u_0+\ii\int_0^tS(t,r)\left(V[u(r)]u(r)\right)\,\dd r,
\end{align}
where $\bigl(S(t,s)\bigr)_{t\ge s\ge 0}$ is defined by~\eqref{defS}.

In addition, one has conservation of 
the $L^2$-norm: for all $t\ge 0$, one has almost surely
\begin{equation}\label{preservL2-ex}
\norm{u(t)}_{L^2}=\norm{u_0}_{L^2}.
\end{equation} 

Furthermore, the SPDE \eqref{prob} is a stochastic Hamiltonian system, in the sense of \cite[Section~2]{MR3649275}, 
and thus its solution preserves the 
stochastic symplectic structure 
$$
\bar\omega=\int_{\R^d}\dd p\wedge \dd q\,\dd x \quad\text{almost surely},
$$
where the overbar on $\omega$ is a reminder that the two-form $\dd p\wedge\dd q$ 
(with differentials made with respect to the initial value) is integrated over $\R^d$. 
Here, $p(t)=\ReT(u(t))$ and $q(t)=\ImT(u(t))$ denote the real and imaginary parts of $u(t)$. 

Moreover, one can bound the solution in $\H^m$ in the following sense. Let $m\in\N$ and assume that $V\in\mathcal{C}^m$. 
There exists $C_m(V)\in(0,\infty)$ such that if $u_0\in H^m$, then almost surely $u(t)\in H^m$ for all $t\ge 0$, 
and 
\begin{equation}\label{borneHm-ex}
\norm{u(t)}_{H^m}\le \e^{C_m(V)\norm{u_0}_{L^2}^2t}\norm{u_0}_{H^m}.
\end{equation}
Finally, for all $u_0\in H^{m+2}$, $T\in(0,\infty)$ and $p\in[1,\infty)$, there exists $C_p(T,\norm{u_0}_{H^{m+2}})\in(0,\infty)$ such that for all $0\leq t_1\leq t_2\leq T$, one has
\begin{equation}\label{tempregul}
\left(\E[\norm{u(t_2)-u(t_1)}_{H^m}^p]\right)^{\frac1p}\le C_p(T,\norm{u_0}_{H^{m+2}})(t_2-t_1)^{\frac12}.
\end{equation}
\end{proposition}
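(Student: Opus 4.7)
The plan is a bootstrap. First, I would establish unique $L^2$-valued mild solvability together with the $L^2$-conservation \eqref{preservL2-ex} via a truncation argument; then read off the symplectic property from the Hamiltonian structure; and finally derive \eqref{borneHm-ex} and \eqref{tempregul} from the mild formulation \eqref{mild} combined with $L^2$-conservation and Gronwall's lemma.

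To obtain existence, uniqueness and \eqref{preservL2-ex} simultaneously, I would introduce a smooth cut-off $\theta_R$ at level $R>\norm{u_0}_{L^2}$ and replace $\Psi_0$ by $\Psi_0^R(u)=\theta_R(\norm{u}_{L^2})\Psi_0(u)$, which is globally Lipschitz from $L^2$ to $L^2$ thanks to \eqref{eq:lemmaNL-Lip} with $m=0$. The isometry \eqref{preservS} turns the truncated mild map $v\mapsto S(\cdot,0)u_0+\ii\int_0^\cdot S(\cdot,r)\Psi_0^R(v(r))\,\dd r$ into a contraction on $\mathcal{C}([0,T];L^2)$ for $T$ small enough, and a standard Picard argument produces a unique global solution $u^R$. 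Applying the Stratonovich chain rule to $\norm{u^R(t)}_{L^2}^2$, the dispersion contributes zero because after integration by parts $(u,\ii\Delta u)_{L^2}=-\ImT\int|\nabla u|^2\,\dd x=0$, and the drift contributes zero because $V[u]$ is real so $(u,\ii V[u]u)_{L^2}=\ImT\int V[u]|u|^2\,\dd x=0$. Hence $\norm{u^R(t)}_{L^2}\equiv\norm{u_0}_{L^2}<R$, the cut-off is inactive, and $u^R$ solves \eqref{mild} with \eqref{preservL2-ex}. Uniqueness for \eqref{mild} is then inherited by localising at truncation levels above the $L^2$-norms of any two candidate continuous solutions on a common bounded interval.

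The symplectic property follows by identifying \eqref{prob} as a stochastic Hamiltonian system in the canonical variables $p=\ReT(u)$, $q=\ImT(u)$ and invoking the general preservation theorem recalled in \cite[Section~2]{MR3649275}. For \eqref{borneHm-ex}, rerunning the truncation argument with the cut-off placed on $\norm{u}_{H^m}$ via \eqref{eq:lemmaNL-Lip} yields almost sure $H^m$-membership of $u(t)$; taking the $H^m$-norm of \eqref{mild} and using \eqref{preservS}, \eqref{eq:lemmaNL-bound} and the just-proved \eqref{preservL2-ex} then gives the linear Gronwall inequality
\[
\norm{u(t)}_{H^m}\le \norm{u_0}_{H^m}+C_m(V)\norm{u_0}_{L^2}^2\int_0^t\norm{u(r)}_{H^m}\,\dd r,
\]
whose integration produces \eqref{borneHm-ex}.

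For \eqref{tempregul}, I would decompose
\[
u(t_2)-u(t_1)=\bigl(S(t_2,t_1)-I\bigr)u(t_1)+\ii\int_{t_1}^{t_2}S(t_2,r)\Psi_0(u(r))\,\dd r.
\]
By \eqref{arnaud0}, the first term is bounded in $H^m$ by $|\beta(t_2)-\beta(t_1)|\cdot\norm{u(t_1)}_{H^{m+2}}$; taking $L^p(\Omega)$-norms and using \eqref{borneHm-ex} at order $m+2$ (which requires $u_0\in H^{m+2}$ and gives a deterministic bound on $\norm{u(t_1)}_{H^{m+2}}$) produces a contribution of order $(t_2-t_1)^{1/2}$. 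The second term is bounded by $C(t_2-t_1)\sup_{r\in[0,T]}\norm{u(r)}_{L^2}^2\norm{u(r)}_{H^m}$ via \eqref{preservS} and \eqref{eq:lemmaNL-bound}, hence of order $(t_2-t_1)$, which is absorbed by $(t_2-t_1)^{1/2}$ on $[0,T]$. The main obstacle throughout is that $\Psi_0$ is only locally (not globally) Lipschitz on the Sobolev spaces, which would normally block global existence and a direct Gronwall argument; the truncation-plus-$L^2$-conservation device is the key that unlocks everything.
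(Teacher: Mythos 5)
Your proposal is correct and follows essentially the same route as the paper: truncate the nonlinearity at a level above $\norm{u_0}_{L^2}$ to obtain a globally Lipschitz problem, use $L^2$-conservation to deactivate the cut-off, derive \eqref{borneHm-ex} by Gronwall from the mild formulation and the isometry \eqref{preservS}, and prove \eqref{tempregul} via the decomposition into $\bigl(S(t_2,t_1)-I\bigr)u(t_1)$ plus the integral term. The only point to flag is that the chain-rule computation for $\norm{u^R(t)}_{L^2}^2$ is formal for merely $L^2$-valued solutions (since $\Delta u^R$ need not lie in $L^2$) and must be justified by applying It\^o's formula to a regularization of $u^R$, as the paper does by invoking \cite[Theorem 4.1]{MR2652190}.
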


\begin{proof}
Since $\Psi_0$ is locally Lipschitz continuous from $H^m$ to $H^m$, 
if $V\in\mathcal{C}^m$, for all $m\in\N$, local well-posedness of mild solutions in $H^m$ is a standard result.

To prove that solutions to \eqref{prob} are global, we use a truncation argument. 
Let $\theta\colon[0,\infty)\to[0,1]$ be a compactly supported Lipschitz continuous function, 
such that $\theta(x)=1$ for $x\in[0,1]$. For any $R\in(0,\infty)$, set $V^R(u)=\theta(R^{-1}\norm{u}_{L^2})V[u]$ 
and $F^R(u)=V^R(u)u$. The mapping $F^R$ is globally Lipschitz continuous, and the SPDE 
\[
\ii\dd u^R(t)  + \Delta u^R(t)\circ\dd \beta(t) + F^R(u^R(t))\,\dd t=0,
\]
with initial condition $u^R(0)=u_0$, thus admits a unique global mild solution $\left(u^R(t)\right)_{t\in[0,T]}$, 
where $T$ is an arbitrary positive real number. Applying It\^o's formula 
to a regularization of $u^R(t)$ as in the proof of \cite[Theorem 4.1]{MR2652190} for instance,
one checks that $\norm{u^R(t)}_{L^2}=\norm{u^R(0)}_{L^2}$ for all $t\in[0,T]$. 
Choosing $R>\norm{u_0}_{L^2}$ shows 
that one can define $u(t)=u^R(t)$ for all $t\ge 0$. 
Then $u(t)$ is the unique solution on $[0,T]$ of the fixed point equation~\eqref{mild}, 
{\it i.\,e.} $u(t)$ is the unique mild solution of~\eqref{prob}, 
and one has the preservation of the $L^2$-norm~\eqref{preservL2-ex}.

The fact that the problem \eqref{prob} is a stochastic Hamiltonian system is seen, 
exactly as in \cite[Sect. 2]{MR3649275}, by considering its real and imaginary parts and 
observing that the obtained differential equations are indeed stochastic Hamiltonian systems. 
The preservation of the stochastic symplectic structure follows also as in \cite[Sect. 2]{MR3649275} since 
the potential $V$ in \eqref{prob} is real-valued 
(as opposed to a power-law nonlinearity $|u|^{2\sigma}$ in the above reference). 

Let us now prove the bound in $H^m$, see~\eqref{borneHm-ex}. Using Lemma~\ref{lemmaNL}, one obtains
\[
\norm{V[u]u}_{H^m}\le C\norm{V[u]}_{\mathcal{C}^m}\norm{u}_{H^m}\le C\norm{V}_{\mathcal{C}^m}\norm{u}_{L^2}^2\norm{u}_{H^m}.
\]
Then, using the isometry property for $S(t,s)$ in $H^m$ (see~\eqref{preservS}), the mild formulation~\eqref{mild} and 
the preservation of the $L^2$-norm~\eqref{preservL2-ex}, one then obtains
\begin{align*}
\norm{u(t)}_{H^m}&\le \norm{u_0}_{H^m}+\int_0^t \norm{V[u(s)]u(s)}_{H^m}\,\dd s\\
&\le \norm{u_0}_{H^m}+C\int_0^t \norm{u(s)}_{L^2}^2\norm{u(s)}_{H^m}\,\dd s\\
&\le \norm{u_0}_{H^m}+C\int_0^t \norm{u_0}_{L^2}^2 \norm{u(s)}_{H^m}\,\dd s.
\end{align*}
Applying Gronwall's lemma then yields~\eqref{borneHm-ex}.

It remains to establish the temporal regularity property~\eqref{tempregul}. Using the mild formulation~\eqref{mild}, and the isometry property~\eqref{preservS}, one obtains
\begin{align*}
\norm{u(t_2)-u(t_1)}_{H^m}&\le \norm{\left(S(t_2,t_1)-I\right)u(t_1)}_{H^m}+\int_{t_1}^{t_2}\norm{V[u(t)]u(t)}_{H^m}\,\dd t\\
&\le \norm{\left(S(t_2,t_1)-I\right)u(t_1)}_{H^m}+C\norm{u_0}_{L^2}^2
\int_{t_1}^{t_2}\norm{u(t)}_{H^m}\,\dd t,
\end{align*}
using the inequality $\norm{V[u(t)]u(t)}_{H^m}\le C\norm{u(t)}_{L^2}^2\norm{u(t)}_{H^m}\le C\norm{u_0}_{L^2}^2\norm{u(t)}_{H^m}$, owing to the preservation of the $L^2$-norm~\eqref{preservL2-ex}.

Finally, using~\eqref{arnaud0}, the fact that $\bigl(\E[|\beta(t_2)-\beta(t_1)|^p]\bigr)^{\frac1p}\le 
C_p|t_2-t_1|^{\frac{1}{2}}$, and the bound for the exact solution in the $H^m$ norm~\eqref{borneHm-ex}, one obtains, for all $0\leq t_1\leq t_2\leq T$,
\begin{align*}
(\E[\norm{u(t_2)-u(t_1)}_{H^m}^p])^{\frac1p}&\leq C_p(t_2-t_1)^{1/2}\e^{C_{m+2}(V)\norm{u_0}_{L^2}^2 t_1}\norm{u_0}_{H^{m+2}}+
C(t_2-t_1)\e^{C_m(V)\norm{u_0}_{L^2}^2 t_2}\norm{u_0}_{H^m}^3\\
&\le C_p(T,\norm{u_0}_{H^{m+2}})(t_2-t_1)^{1/2}.
\end{align*}
This yields~\eqref{tempregul}, and concludes the proof of Proposition~\ref{prop-ex}.

\end{proof}

\section{Numerical analysis of the splitting scheme}\label{sec-num}

In this section, we propose and study an efficient time integrator for the SPDE \eqref{prob}. 
We state and prove some properties of the numerical solution, in particular preservation of the $L^2$-norm 
(Proposition~\ref{prop-norm2}). 
Furthermore, we state the main strong convergence result (Theorem~\ref{th-ms}) of the paper, 
namely that the splitting scheme has convergence rate $1$. Finally, we deduce various auxiliary results from the 
main theorem.

\subsection{Presentation of the splitting scheme}
Let $T>0$ be a fixed time horizon and an integer $N\geq1$. 
We define the step size of the numerical method by $\tau=T/N$  
and denote the discrete times by $t_n=n\tau$, for $n=0,\ldots,N$. 
Without loss of generality, we assume that $\tau\in(0,1)$.

The main idea of a splitting integrator for the SPDE \eqref{prob} is based on the observation 
that the vector field of the original problem can be decomposed in two parts 
(linear and nonlinear parts respectively) that are exactly integrated.

On the one hand, the solution of the linear stochastic evolution equation
\[
\ii\dd u(t)+\Delta u(t)\circ \dd \beta(t)=0,\quad u(0)=u_0
\]
is given by $u(t)=S(t,0)u_0$, where the random propagator $S(t,0)$ is defined by~\eqref{defS}.

On the other hand, the solution of the nonlinear evolution equation
\[
\ii\dd u(t)+V[u(t)]u(t)\,\dd t=0,\quad u(0)=u_0
\]
is given by $u(t)=\Phi_t(u_0)$, where for all $t\ge 0$ and all $u\in L^2$, one has
\begin{equation}\label{defPhi}
\Phi_t(u)=\e^{\ii tV[u]}u.
\end{equation}

The Lie-Trotter splitting strategy yields the definition of the 
following time integrator for the nonlinear Schr\"odinger equation with white noise dispersion~\eqref{prob}:
\begin{equation}\label{split}
u_{n+1}=S(t_{n+1},t_n)\Phi_\tau(u_n).
\end{equation}

The following notation will be used in the sequel: for all $\tau\in(0,1)$ and $u\in L^2$, set
\begin{equation*}
\Psi_\tau(u)=\frac{\Phi_\tau(u)-u}{\ii\tau}.  
\end{equation*}

\subsection{Properties of the numerical solution}

This subsection lists useful properties of the numerical solution given by the splitting scheme \eqref{split}. 

\textbf{Conservation of the $\mathbf L^{\mathbf2}$-norm}. 
The splitting scheme exactly preserves the $L^2$-norm as does the exact solution to the SPDE \eqref{prob}, 
see equation \eqref{preservL2-ex} in Proposition~\ref{prop-ex}.  
This conservation property plays a crucial role in the error analysis presented below.
\begin{proposition}\label{prop-norm2}
Let $u_0\in L^2$, $\tau\in(0,1)$ and let $\bigl(u_n\bigr)_{n\in\N_0}$ be given by the splitting scheme~\eqref{split}, 
one then has conservation of the $L^2$-norm: for all $n\in\N$, one has almost surely
\begin{equation}\label{preservL2-split}
\norm{u_n}_{L^2}=\norm{u_0}_{L^2}.
\end{equation}
\end{proposition}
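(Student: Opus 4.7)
The plan is to combine two pointwise-isometric properties: the operator $S(t_{n+1},t_n)$ is an $L^2$-isometry by~\eqref{preservS}, and the nonlinear flow $\Phi_\tau$ defined in~\eqref{defPhi} is also an $L^2$-isometry because it multiplies $u$ by the pointwise phase factor $\mathrm{e}^{\ii\tau V[u](x)}$, which has modulus one since $V$ is real-valued and hence $V[u]=V\ast|u|^2$ is real-valued.

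First, I would fix $u\in L^2$ and note that since $V:\R^d\to\R$, the convolution $V[u](x)=\int V(x-y)|u(y)|^2\,\dd y$ takes real values for (almost) every $x\in\R^d$. Therefore $|\mathrm{e}^{\ii\tau V[u](x)}|=1$ pointwise, and
\[
\norm{\Phi_\tau(u)}_{L^2}^2=\int_{\R^d}\bigl|\mathrm{e}^{\ii\tau V[u](x)}u(x)\bigr|^2\,\dd x=\int_{\R^d}|u(x)|^2\,\dd x=\norm{u}_{L^2}^2.
\]
This identity holds almost surely, regardless of the randomness carried by $u$ through $V[u]$.

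Next, I would proceed by induction on $n$. The base case $\|u_0\|_{L^2}=\|u_0\|_{L^2}$ is trivial. Assuming $\|u_n\|_{L^2}=\|u_0\|_{L^2}$ almost surely, the definition~\eqref{split} of the scheme together with the isometry~\eqref{preservS} of $S(t_{n+1},t_n)$ on $L^2$ (the case $m=0$) and the isometry of $\Phi_\tau$ just established give
\[
\norm{u_{n+1}}_{L^2}=\norm{S(t_{n+1},t_n)\Phi_\tau(u_n)}_{L^2}=\norm{\Phi_\tau(u_n)}_{L^2}=\norm{u_n}_{L^2}=\norm{u_0}_{L^2}
\]
almost surely, which closes the induction and yields~\eqref{preservL2-split}.

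There is no real obstacle here; the only point to be careful about is that $V[u]$ is indeed real-valued at the level of the random function, so that the pointwise unimodularity of $\mathrm{e}^{\ii\tau V[u](x)}$ is valid almost surely and almost everywhere. Once this is observed, the remainder is a one-line induction using the already established isometry property~\eqref{preservS} of the linear propagator.
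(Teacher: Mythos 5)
Your proof is correct and follows essentially the same route as the paper: the paper's argument is exactly the chain $\norm{u_{n+1}}_{L^2}=\norm{S(t_{n+1},t_n)\Phi_\tau(u_n)}_{L^2}=\norm{\Phi_\tau(u_n)}_{L^2}=\norm{u_n}_{L^2}$ followed by a recursion, with the unimodularity of $\e^{\ii\tau V[u]}$ (which you spell out explicitly) left implicit in the step ``the definition~\eqref{defPhi} of the flow $\Phi_\tau$''. Your added remark that $V[u]=V\ast|u|^2$ is real-valued is the right justification and matches the paper's intent.
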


\begin{proof}
Using the isometry property~\eqref{preservS}, then the definition~\eqref{defPhi} of the flow $\Phi_\tau$, 
a direct computation from the definition of the scheme~\eqref{split} gives for all $n\in\N_0$
\[
\norm{u_{n+1}}_{L^2}=\norm{S(t_{n+1},t_n)\Phi_\tau(u_n)}_{L^2}=\norm{\Phi_\tau(u_n)}_{L^2}=\norm{u_n}_{L^2}.
\]
A straightforward recursion argument concludes the proof.
\end{proof}

\textbf{Bounds for the numerical solution in $\mathbf H^{\mathbf m}$}. 
Proposition~\ref{propbnd} below states almost sure upper bounds for the numerical solution 
$\norm{u_n}_{H^m}$, for all $n\in\N_0$ and $m\in\N$.

\begin{proposition}\label{propbnd}
Let $m\in\N$ and assume that $V\in\mathcal{C}^m$. There exists $C_m(V)\in(0,\infty)$, such that for any initial condition $u_0\in H^m$, the numerical solution $u_n$ defined by the splitting scheme~\eqref{split} satisfies the following upper bound: for all $n\in\N_0$, 
one has almost surely
\begin{equation}\label{bnd}
\norm{u_n}_{H^m}\leq \e^{C_m(V)t_n\norm{u_0}_{L^2}^{2m}}\norm{u_0}_{H^m}.
\end{equation}
\end{proposition}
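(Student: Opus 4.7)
The plan is to argue by induction on $n$: the base case $n=0$ is trivial, and for the inductive step, the isometry property~\eqref{preservS} immediately gives $\norm{u_{n+1}}_{H^m}=\norm{\Phi_\tau(u_n)}_{H^m}$, so it suffices to prove an estimate of the form $\norm{\Phi_\tau(u)}_{H^m}\le (1+\tau K)\norm{u}_{H^m}$ with a constant $K=K(V,\norm{u}_{L^2})$ that is polynomial in $\norm{u}_{L^2}$. Iterating this inequality and using Proposition~\ref{prop-norm2} to freeze $\norm{u_n}_{L^2}=\norm{u_0}_{L^2}$ then yields~\eqref{bnd} through the standard passage $(1+\tau K)^n\le \e^{t_n K}$. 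A direct application of the Banach-algebra estimate from Lemma~\ref{lem:Leibniz} to the multiplication by $\e^{\ii\tau V[u]}$ in $H^m$ would only give $\norm{\Phi_\tau(u)}_{H^m}\le C_m\norm{\e^{\ii\tau V[u]}}_{\mathcal{C}^m}\norm{u}_{H^m}$ with $C_m\ge 1$ independently of $\tau$, which is fatal upon $n\sim 1/\tau$ iterations. Circumventing this is the main obstacle.

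The cleanest way around it is an energy estimate on the flow $\Phi_t$ itself. For fixed $u\in H^m$, set $v(t)=\Phi_t(u)=\e^{\ii tV[u]}u$; since $V[u]$ does not depend on $t$, $v$ solves the linear equation $\dot v(t)=\ii V[u]v(t)$, $v(0)=u$. Differentiating $\norm{\pa_\alpha v(t)}_{L^2}^2$ in time and applying Leibniz's formula to $\pa_\alpha(V[u]v)$ gives
\[
\tfrac12\tfrac{\dd}{\dd t}\norm{\pa_\alpha v(t)}_{L^2}^2 = -\ImT\bigl(V[u]\pa_\alpha v,\pa_\alpha v\bigr) - \ImT\sum_{0<\gamma\le\alpha}\binom{\alpha}{\gamma}\bigl(\pa_\gamma V[u]\,\pa_{\alpha-\gamma}v,\pa_\alpha v\bigr).
\]
The first term vanishes identically because $V[u]=V\ast|u|^2$ is real-valued, so $(V[u]\pa_\alpha v,\pa_\alpha v)\in\R$; this is the crucial cancellation that is hidden in the Banach-algebra approach. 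Each derivative $\pa_\gamma V[u]$ with $|\gamma|\ge 1$ appearing in the commutator remainder is controlled in $L^\infty$ by $\norm{V}_{\mathcal{C}^m}\norm{u}_{L^2}^2$ via Young's inequality applied to the convolution $V\ast|u|^2$, and Cauchy--Schwarz together with summation over $|\alpha|\le m$ yields
\[
\tfrac{\dd}{\dd t}\norm{v(t)}_{H^m}^2\le C_m(V)\norm{u}_{L^2}^2\norm{v(t)}_{H^m}^2.
\]
Gronwall's lemma then produces $\norm{\Phi_\tau(u)}_{H^m}\le \e^{C_m(V)\norm{u}_{L^2}^2\tau/2}\norm{u}_{H^m}$.

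Combining this with Proposition~\ref{prop-norm2}, one obtains for the inductive step
\[
\norm{u_{n+1}}_{H^m}=\norm{\Phi_\tau(u_n)}_{H^m}\le \e^{C_m(V)\norm{u_0}_{L^2}^2\tau/2}\norm{u_n}_{H^m},
\]
which iterates to the bound~\eqref{bnd}. The exponent $\norm{u_0}_{L^2}^{2m}$ appearing in the statement, instead of the a priori sharper $\norm{u_0}_{L^2}^2$, can be recovered (with a possibly larger $C_m(V)$) by a Faà di Bruno expansion of $\pa_\gamma\e^{\ii\tau V[u]}$ which expresses it as a sum of monomials $(\ii\tau)^p\prod_j\pa_{\gamma_j}V[u]$ with $1\le p\le |\gamma|$, and by bounding each such monomial through $\norm{V[u]}_{\mathcal{C}^m}^p\le \norm{V}_{\mathcal{C}^m}^p\norm{u_0}_{L^2}^{2p}$, with the leading contribution when $p=|\gamma|=m$. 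The conceptual difficulty is entirely concentrated in the cancellation step above; everything else is a Gronwall iteration combined with the $L^2$-preservation already established.
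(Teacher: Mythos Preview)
Your proof is correct and takes a genuinely different route from the paper. The paper isolates the increment $\Phi_\tau(u)-u=\theta_\tau(V[u])u$ with $\theta_\tau(y)=\e^{\ii\tau y}-1$, applies the Leibniz estimate of Lemma~\ref{lem:Leibniz} to get $\norm{\Phi_\tau(u)-u}_{H^m}\le C_m\norm{\theta_\tau(V[u])}_{\mathcal{C}^m}\norm{u}_{H^m}$, and then uses Fa\`a di Bruno together with $|\theta_\tau(y)|\le\tau|y|$ and $|\theta_\tau^{(k)}(y)|\le\tau^k$ to extract the $O(\tau)$ factor; this is how the exponent $\norm{u_0}_{L^2}^{2m}$ arises. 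Your energy argument instead differentiates $\norm{\partial_\alpha v(t)}_{L^2}^2$ along the linear flow $\dot v=\ii V[u]v$ and kills the top-order term by the reality of $V[u]$, leaving only commutator terms controlled by $\norm{V}_{\mathcal{C}^m}\norm{u}_{L^2}^2$. This yields the sharper growth rate $\e^{C_m(V)\norm{u_0}_{L^2}^2 t_n}$, matching the bound~\eqref{borneHm-ex} for the exact solution, and avoids Fa\`a di Bruno entirely. The paper's approach is purely algebraic and immediately gives the exponent stated in~\eqref{bnd}; your approach is more dynamical, exposes the cancellation mechanism explicitly, and gives a tighter estimate when $\norm{u_0}_{L^2}\ge 1$ (though not when $\norm{u_0}_{L^2}<1$, so strictly speaking your bound and the stated one are not comparable without the Fa\`a di Bruno remark you append). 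Either argument suffices for all downstream uses in Section~\ref{sec-error}.
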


The proof of Proposition~\ref{propbnd} requires the following auxiliary result.
\begin{lemma}\label{lemma-bound}
Let $m\in\N$ and assume that $V\in\mathcal{C}^m$.

There exists $C_m(V)\in(0,\infty)$ such that for all $\tau\in(0,1)$ and all $u\in H^m$, one has
\[
\norm{\Phi_\tau(u)}_{H^m}\le \left(1+C_m(V)\tau(1+\norm{u}_{L^2}^{2m})\right)\norm{u}_{H^m}.
\]
\end{lemma}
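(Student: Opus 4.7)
Write $\Phi_\tau(u) = v \cdot u$ with $v(x) = \e^{\ii\tau V[u](x)}$ viewed as a (random but deterministic-in-$x$) function on $\R^d$. The decisive observation is that $V$ is real-valued, so $V[u]$ is real-valued, and hence $|v(x)| = 1$ pointwise. This is what will allow the leading order in the Leibniz expansion to reproduce $\norm{u}_{H^m}$ exactly (with no multiplicative constant larger than $1$), with all extra contributions carrying a factor of $\tau$.

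First I apply the Leibniz rule from Lemma~\ref{lem:Leibniz} to $\partial_\alpha(uv)$ for each multi-index $\alpha$ with $|\alpha|\le m$, and isolate the term $\gamma=\alpha$:
\[
\partial_\alpha\bigl(uv\bigr) = v\,\partial_\alpha u + \sum_{\gamma<\alpha}\binom{\alpha}{\gamma}\partial_\gamma u\,\partial_{\alpha-\gamma}v.
\]
Since $|v|\equiv1$, the first term has $L^2$-norm equal to $\norm{\partial_\alpha u}_{L^2}$; summing the $\ell^2$ norm over $|\alpha|\le m$ yields $\norm{u}_{H^m}$. For every remaining term I will need an $L^\infty$ bound on $\partial_\beta v$ for $1\le|\beta|\le m$.

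Next I estimate $\partial_\beta v$ via the Faà di Bruno formula applied to the composition $\e^{\ii\tau W}$ with $W=V[u]$: each $\partial_\beta v$ is a finite sum of terms of the shape $(\ii\tau)^k \e^{\ii\tau W}\prod_{j=1}^{k}\partial_{\delta_j}W$ with $\sum_j\delta_j=\beta$ and $1\le k\le|\beta|\le m$. Since $W=V\ast|u|^2$, Young's convolution inequality gives $\norm{\partial_{\delta}W}_{L^\infty}\le\norm{\partial_\delta V}_{L^\infty}\norm{u}_{L^2}^2\le\norm{V}_{\mathcal{C}^m}\norm{u}_{L^2}^2$. Collecting these bounds and using $\tau\in(0,1)$ so that $\tau^k\le\tau$ for $k\ge1$, together with $\norm{u}_{L^2}^{2k}\le1+\norm{u}_{L^2}^{2m}$ for $1\le k\le m$, I obtain
\[
\norm{\partial_\beta v}_{L^\infty}\le C_m(V)\,\tau\bigl(1+\norm{u}_{L^2}^{2m}\bigr),\qquad 1\le|\beta|\le m.
\]

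Finally I combine: for each $\alpha$ with $|\alpha|\le m$,
\[
\norm{\partial_\alpha(uv)}_{L^2}\le \norm{\partial_\alpha u}_{L^2}+C_m(V)\tau\bigl(1+\norm{u}_{L^2}^{2m}\bigr)\norm{u}_{H^m},
\]
and the triangle inequality on the $\ell^2$-sum over $|\alpha|\le m$ (the number of such multi-indices only enlarges the constant $C_m(V)$) gives exactly
\[
\norm{\Phi_\tau(u)}_{H^m}\le\bigl(1+C_m(V)\tau(1+\norm{u}_{L^2}^{2m})\bigr)\norm{u}_{H^m}.
\]
The only moderately delicate step is the Faà di Bruno bookkeeping that turns every derivative on $v$ into a factor of $\tau$ times a polynomial in $\norm{u}_{L^2}^2$; once the reality of $V[u]$ is used to make $|v|=1$, everything else is a direct application of the Leibniz rule and Young's inequality.
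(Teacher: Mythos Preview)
Your proof is correct and follows essentially the same approach as the paper's. The only organizational difference is that the paper first writes $\Phi_\tau(u)-u=\theta_\tau(V[u])\,u$ with $\theta_\tau(y)=\e^{\ii\tau y}-1$ and then applies the product bound of Lemma~\ref{lem:Leibniz} together with $|\theta_\tau(y)|\le\tau|y|$ and $|\theta_\tau^{(k)}(y)|\le\tau^k$, whereas you expand $\partial_\alpha(vu)$ directly and isolate the term $v\,\partial_\alpha u$ using $|v|=1$; both routes use Fa\`a di Bruno and the same convolution estimate $\norm{\partial_\delta V[u]}_{L^\infty}\le\norm{V}_{\mathcal{C}^m}\norm{u}_{L^2}^2$ to extract the factor $\tau(1+\norm{u}_{L^2}^{2m})$.
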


\begin{proof}[Proof of Lemma~\ref{lemma-bound}]
Using the definition~\eqref{defPhi}, one has the identity $\Phi_\tau(u)=\e^{\ii\tau V[u]}u$, with $V[u]=V\ast|u|^2\in\mathcal{C}^m$ and $\e^{\ii\tau V[u]}\in \mathcal{C}^m$, since $V\in\mathcal{C}^m$.

The following expression holds: for all $u\in H^m$ one has
\[
\Phi_\tau(u)-u=\theta_\tau(V[u])u,
\]
where $\theta_\tau(y)=\e^{\ii \tau y}-1$ for all $y\in \R$. Applying the inequality from Lemma~\ref{lem:Leibniz}, one has, for all $u\in H^m$,
\[
\norm{\Phi_\tau(u)-u}_{H^m}\le C_m\norm{\theta_\tau(V[u])}_{\mathcal{C}^m}\norm{u}_{H^m}.
\]
It remains to study the behavior of $\norm{\theta_\tau(V[u])}_{\mathcal{C}^m}$. The auxiliary function $\theta_\tau$ satisfies the following properties: for all $y\in\R$, all $\tau\in(0,1)$ and all $k\in\N$,
\begin{align*}
|\theta_\tau(y)|&\le \tau |y|\\
|\theta_\tau^{(k)}(y)|&\le \tau^{k}.
\end{align*}
Using the Fa\`a di Bruno formula, one obtains the bounds
\[
\norm{\partial_\gamma \theta_\tau(V[u])}_{\mathcal{C}^0}\le C\begin{cases} \tau\norm{V[u]}_{\mathcal{C}^0}~,\quad \gamma=0,\\ 
C_{|\gamma|}\tau\left(1+\norm{V[u]}_{\mathcal{C}^{|\gamma|}}^{|\gamma|}\right)~,\quad \gamma\neq 0.\end{cases}
\]
Finally, using the inequality
\[\norm{V[u]}_{\mathcal{C}^{|\gamma|}}^{|\gamma|}=\norm{\partial_\gamma \left(V \ast|u|^2\right)}_{\mathcal{C}^{0}}^{|\gamma|}\le C_m(V)\norm{u}_{L^2}^{2m}
\]
if $1\le |\gamma|\le m$ then concludes the proof of Lemma~\ref{lemma-bound}.

\end{proof}

We are now in position to provide the proof of Proposition~\ref{propbnd}.
\begin{proof}[Proof of Proposition~\ref{propbnd}]
Using the definition~\eqref{split} of the splitting scheme, 
the isometry property~\eqref{preservS} of the random propagator $S(t_{n+1},t_n)$, and Lemma~\ref{lemma-bound}, one gets
\begin{align*}
\norm{u_{n+1}}_{H^m}&= \norm{S(t_{n+1},t_n)\Phi_\tau(u_n)}_{H^m}=\norm{\Phi_\tau(u_n)}_{H^m}\leq \left(1+C_m(V)
\norm{u_n}_{L^2}^{2m}\tau\right)\norm{u_n}_{H^m}.
\end{align*}
Using the preservation property~\eqref{preservL2-split} of the $L^2$-norm by the splitting integrator, 
see Proposition~\ref{prop-norm2}, one then obtains the following estimate: for all $n\in\N_0$
\[
\norm{u_{n+1}}_{H^m}\le \left(1+C_m(V)\norm{u_0}_{L^2}^{2m}\tau\right)\norm{u_n}_{H^m}.
\]
Finally, a straightforward recursion argument yields the following bound: for all $n\in\N_0$, one has
\[
\norm{u_n}_{H^m}\le \e^{C_m(V)t_n\norm{u_0}_{L^2}^{2m}}\norm{u_0}_{H^m}.
\]

All the estimates above hold in an almost sure sense. This concludes the proof of Proposition~\ref{propbnd}.

\end{proof}

\textbf{Numerical preservation of the stochastic symplectic structure}. 
As seen in Proposition~\ref{prop-ex}, the exact solution to the SPDE \eqref{prob} 
preserves a stochastic symplectic structure. 
The next result states that the same geometric structure is also preserved by the splitting scheme \eqref{split}. 
\begin{proposition}
Consider the numerical discretization of the Schr\"odinger equation with white noise dispersion \eqref{prob} 
by the splitting scheme \eqref{split}. Then, the splitting scheme preserves the stochastic symplectic structure
$$
\bar\omega^{n+1}=\bar\omega^{n}\quad\text{for}\quad n=0,\ldots,N-1\quad\text{almost surely},
$$
where $\bar\omega^n=\displaystyle\int_{\R^d}\dd p^n\wedge \dd q^n\,\dd x$ and $p^n$, resp. $q^n$, 
are the real, resp. imaginary parts of $u^n$.
\end{proposition}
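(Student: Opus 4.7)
The plan is to exploit the fact that the splitting one-step map $u_n \mapsto u_{n+1} = S(t_{n+1},t_n)\Phi_\tau(u_n)$ is the composition of two maps, each of which is itself the exact time-$\tau$ flow of a Hamiltonian subsystem of~\eqref{prob}. Since the composition of symplectic maps is symplectic, the one-step update would preserve $\bar\omega$, and a straightforward induction on $n$ would then yield the identity $\bar\omega^{n+1}=\bar\omega^n$ stated in the proposition.

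First, I would split the vector field of~\eqref{prob} into the linear and nonlinear subsystems
\begin{align*}
\ii\dd v(t) + \Delta v(t)\circ\dd\beta(t) &= 0, \\
\ii\dd w(t) + V[w(t)]w(t)\,\dd t &= 0,
\end{align*}
whose exact flows over a step $\tau$ are precisely the maps $S(t_{n+1},t_n)$ (see~\eqref{defS}) and $\Phi_\tau$ (see~\eqref{defPhi}). Writing $u=p+\ii q$ with $p=\ReT(u)$ and $q=\ImT(u)$, as in Proposition~\ref{prop-ex}, I would then check that each subsystem, once rewritten in terms of $(p,q)$, is a (stochastic) Hamiltonian system in the sense of \cite[Section~2]{MR3649275}: the linear part corresponds to a quadratic Hamiltonian proportional to $\int_{\R^d}|\nabla(p+\ii q)|^2\,\dd x$ driven by $\beta$ in Stratonovich form, while the nonlinear part corresponds to a deterministic Hamiltonian proportional to $\int_{\R^d}(V\ast|p+\ii q|^2)|p+\ii q|^2\,\dd x$, which is real-valued precisely because $V$ itself is real-valued.

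Once this Hamiltonian structure is identified, I would apply the argument of \cite[Section~2]{MR3649275}, already invoked in Proposition~\ref{prop-ex} for the full SPDE, separately to each of the two subsystems. This would yield that both $\Phi_\tau$ and $S(t_{n+1},t_n)$ preserve $\bar\omega$ almost surely, and hence so does their composition, which is the one-step splitting map. The main obstacle I anticipate is the Stratonovich calculus needed to handle the linear subsystem, since symplecticity there holds only pathwise via the Stratonovich chain rule; however, this reduces to a verbatim repetition of the argument already used for the full equation, so no essentially new technical difficulty is introduced. The nonlinear subsystem being purely deterministic, its symplecticity follows from the classical fact that the exact flow of a real-valued Hamiltonian preserves the canonical two-form, and requires no stochastic input.
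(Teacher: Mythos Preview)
Your proposal is correct and follows essentially the same approach as the paper: both decompose the one-step map into the two exact subflows $\Phi_\tau$ and $S(t_{n+1},t_n)$, verify (by passing to real and imaginary parts and using that $V$ is real-valued) that each subsystem is an infinite-dimensional stochastic Hamiltonian system in the sense of \cite[Section~2]{MR3649275}, and conclude symplecticity of the scheme from the fact that a composition of symplectic maps is symplectic. Your write-up is in fact slightly more explicit in naming the underlying Hamiltonians, but the logical skeleton is identical.
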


\begin{proof}
The splitting integrator \eqref{split} is obtained by solving 
exactly sequentially the following differential equations:
$$
\ii\dd u(t)+V[u(t)]u(t)\,\dd t=0
$$
and 
$$
\ii\dd u(t)+\Delta u(t)\circ\dd \beta(t)=0.
$$
Considering the real and imaginary parts of these differential equations and using the fact that $V$ is real-valued, 
one gets
$$
\dd p(t)=-V[(p(t),q(t))]q(t)\,\dd t,\quad\dd q(t)=V[(p(t),q(t))]p(t)\,\dd t
$$
and
$$
\dd p(t)=-\Delta q(t)\circ \dd\beta(t),\quad \dd q(t)=\Delta p(t)\circ \dd\beta(t).
$$
The above problems are infinite-dimensional stochastic Hamiltonian systems in the sense of \cite[Eq. (6)]{MR3649275}. 
It thus follows, as in \cite[Prop. 3.3]{MR3649275}, that the splitting scheme preserves the stochastic symplectic structures 
of each of these Hamiltonian systems, as it is obtained as composition of symplectic maps, and hence the statement. 
\end{proof}

\subsection{Convergence results}

We are now in position to state the main result of this article.

\begin{theorem}\label{th-ms}
Let $\bigl(u(t)\bigr)_{t\ge 0}$, resp.~$\bigl(u_n\bigr)_{n\in\N_0}$, be the solutions of the stochastic Schr\"odinger equation~\eqref{prob}, resp. of the splitting scheme~\eqref{split}, with (non-random) initial condition $u_0$.

Let $m\in\N_0$ and assume that $V\in\mathcal{C}^{m+4}$. For all $p\in[1,\infty)$, all $T\in(0,\infty)$ and all $u_0\in H^{m+4}$, there exists $C_{m,p}(T,\norm{u_0}_{H^{m+4}})\in(0,\infty)$ such that, for all $\tau\in(0,1)$, one has
\begin{equation}\label{error}
\underset{0\le n\le N}\sup~\left(\E\left[\norm{u_n-u(t_n)}_{H^m}^{p}\right]\right)^{\frac1p}\leq C_{m,p}(T,\norm{u_0}_{H^{m+4}})\tau.
\end{equation}

\end{theorem}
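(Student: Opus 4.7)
The plan is to derive a recursive error inequality for $e_n = u(t_n) - u_n$, identify the local consistency error, Taylor-expand it to two orders, and exploit a hidden martingale structure so that the local stochastic terms of size $\tau^{3/2}$ accumulate to $O(\tau)$ rather than the naive $O(\sqrt\tau)$. Starting from the mild formulation \eqref{mild} on $[t_n,t_{n+1}]$ and rewriting \eqref{split} as $u_{n+1}=S(t_{n+1},t_n)u_n+\ii\tau S(t_{n+1},t_n)\Psi_\tau(u_n)$, I would obtain
\begin{equation*}
e_{n+1}=S(t_{n+1},t_n)e_n+\ii\tau S(t_{n+1},t_n)\bigl(\Psi_\tau(u(t_n))-\Psi_\tau(u_n)\bigr)+\rho_n,
\end{equation*}
with
\begin{equation*}
\rho_n=\ii\int_{t_n}^{t_{n+1}}\bigl[S(t_{n+1},r)\Psi_0(u(r))-S(t_{n+1},t_n)\Psi_\tau(u(t_n))\bigr]\dd r.
\end{equation*}
Using the local Lipschitz property (a variant of \eqref{eq:lemmaNL-Lip}) together with the almost-sure bounds \eqref{borneHm-ex}--\eqref{bnd} and the $L^2$-conservation (Propositions~\ref{prop-ex} and~\ref{prop-norm2}), the second term is dominated by $C\tau\norm{e_n}_{H^m}$, feeding the Gronwall step. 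Iterating and using the semigroup property $S(t_n,t_{k+1})S(t_{k+1},t_k)=S(t_n,t_k)$ together with the isometry \eqref{preservS}, the analysis reduces to estimating $\sum_k S(t_n,t_{k+1})\rho_k$ in $L^p(\Omega,H^m)$.

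I would decompose the integrand of $\rho_n$ as $\alpha_1(r)+\alpha_2(r)+\alpha_3$ with $\alpha_1(r)=S(t_{n+1},r)\bigl(\Psi_0(u(r))-\Psi_0(u(t_n))\bigr)$, $\alpha_2(r)=\bigl(S(t_{n+1},r)-S(t_{n+1},t_n)\bigr)\Psi_0(u(t_n))$, and $\alpha_3=S(t_{n+1},t_n)\bigl(\Psi_0(u(t_n))-\Psi_\tau(u(t_n))\bigr)$. For $\alpha_3$, Taylor-expanding $\e^{\ii\tau V[u]}$ and combining with Lemma~\ref{lem:Leibniz} yields $\norm{\Psi_\tau(u)-\Psi_0(u)}_{H^m}=O(\tau)$ with polynomial dependence on $\norm{u}_{L^2}$ and $\norm{u}_{H^m}$, so the integrated contribution is $O(\tau^2)$ per step and $O(\tau)$ after summation. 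For $\alpha_1$ and $\alpha_2$, the naive bounds from \eqref{arnaud0} and \eqref{tempregul} only give size $\tau^{3/2}$ per step, so an extra Taylor order is needed. From the Fourier representation of \eqref{defS},
\begin{equation*}
S(r,t_n)v-v=\ii\Delta v\,(\beta(r)-\beta(t_n))+R^S(v),\qquad \norm{R^S(v)}_{H^m}\le C(\beta(r)-\beta(t_n))^2\norm{v}_{H^{m+4}},
\end{equation*}
and the mild formulation gives $u(r)-u(t_n)=\ii\Delta u(t_n)(\beta(r)-\beta(t_n))+R^u$ with $\bigl(\E[\norm{R^u}_{H^m}^p]\bigr)^{1/p}=O(\tau)$. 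Combining these with a Taylor expansion of $\Psi_0$ via \eqref{eq:lemmaNL-d1}--\eqref{eq:lemmaNL-d2} yields
\begin{equation*}
\alpha_1(r)+\alpha_2(r)=\ii S(t_{n+1},t_n)\bigl[\Psi_0'(u(t_n))\cdot\Delta u(t_n)-\Delta\Psi_0(u(t_n))\bigr](\beta(r)-\beta(t_n))+\widetilde R_n(r),
\end{equation*}
with $\sup_{r}\bigl(\E[\norm{\widetilde R_n(r)}_{H^m}^p]\bigr)^{1/p}=O(\tau)$, hence $O(\tau^2)$ per step and $O(\tau)$ after summation.

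The leading integrated term is $\ii S(t_{n+1},t_n)X_nJ_n$, where $X_n=\Psi_0'(u(t_n))\cdot\Delta u(t_n)-\Delta\Psi_0(u(t_n))$ is $\mathcal{F}_{t_n}$-measurable and almost-surely bounded in $H^m$ (via \eqref{borneHm-ex} applied in $H^{m+2}$), and $J_n=\int_{t_n}^{t_{n+1}}(\beta(r)-\beta(t_n))\dd r$ is centered with $\bigl(\E[|J_n|^p]\bigr)^{1/p}\le C_p\tau^{3/2}$. After iteration, this contributes $\sum_{k=0}^{n-1}\ii S(t_n,t_k)X_kJ_k$ to $e_n$. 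I would then factor $S(t_n,t_k)=S(t_n,0)S(0,t_k)$ (the Laplacians commute) and use the isometry \eqref{preservS} to peel off $S(t_n,0)$, reducing to bounding $\bigl(\E[\norm{\sum_k S(0,t_k)X_kJ_k}_{H^m}^p]\bigr)^{1/p}$. Since $J_k$ is centered and independent of $\mathcal{F}_{t_k}$, one has $\E[S(0,t_k)X_kJ_k\mid\mathcal{F}_{t_k}]=0$, so the partial sums form a genuine $H^m$-valued martingale, and a discrete Burkholder--Davis--Gundy inequality gives
\begin{equation*}
\bigl(\E[\norm{\sum_k S(0,t_k)X_kJ_k}_{H^m}^p]\bigr)^{1/p}\le C_p\Bigl(\sum_k\bigl(\E[\norm{S(0,t_k)X_kJ_k}_{H^m}^p]\bigr)^{2/p}\Bigr)^{1/2}\le C_p\sqrt{N\tau^3}=C_p\sqrt{T}\,\tau.
\end{equation*}
A discrete Gronwall argument on $(\E[\norm{e_n}_{H^m}^p])^{1/p}$, combining the Gronwall term with the $O(\tau)$ bounds for $\alpha_3$, for $\widetilde R_n$, and for the martingale piece, then yields \eqref{error}.

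The hardest step is arguably the Burkholder-type estimate, which relies on the commutativity-based factorization $S(t_n,t_k)=S(t_n,0)S(0,t_k)$ to convert a sum that is a priori not adapted (because $S(t_n,t_{k+1})$ depends on Brownian motion posterior to $\mathcal{F}_{t_{k+1}}$) into a genuine martingale. The assumption $V\in\mathcal{C}^{m+4}$ and $u_0\in H^{m+4}$ appears precisely because of the two successive Taylor expansions of $S(r,t_n)-I$, each costing two derivatives through $\Delta$, with Lemma~\ref{lemmaNL} requiring matching regularity on $V$ to keep $\Psi_0$ and its derivatives bounded on the appropriate Sobolev scale.
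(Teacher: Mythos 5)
Your proposal is correct and follows essentially the same strategy as the paper: conjugate by the unitary group $S(\cdot,0)$ to restore adaptedness, expand both the propagator increment and the nonlinearity increment to second order, exploit the martingale structure of the centred $O(\tau^{3/2})$ local terms via a Burkholder--Davis--Gundy argument so that they accumulate to $O(\tau)$, and close with a discrete Gronwall lemma. The only differences are in execution: the paper performs the change of variables $v(t)=S(0,t)u(t)$ at the outset and treats the leading stochastic contributions as continuous It\^o integrals (writing the auxiliary processes $S(0,t)\Psi_0(u(t_n))$ and $S(t,t_n)u(t_n)$ as solutions of linear It\^o SPDEs, with the $\frac12\Delta^2$ correction supplying the $H^{m+4}$ requirement, and using stochastic Fubini plus the continuous BDG inequality), whereas you expand the Fourier multiplier pathwise to second order and bound the discrete martingale $\sum_k S(0,t_k)X_kJ_k$ --- both routes cost the same four derivatives and yield the same estimates.
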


The proof of Theorem~\ref{th-ms} is postponed to Section~\ref{sec-error}.

Note that contrary to previous works in the literature, \cite{m06,MR3312594,MR3736655,MR3649275}, 
concerning the analysis of 
numerical schemes for stochastic Schr\"odinger equations 
with white noise dispersion with a globally Lipschitz continuous nonlinearity, in Theorem~\ref{th-ms} 
we consider the moments of arbitrary order $p\in[1,\infty)$, instead of only $p=2$ (mean-square error). 
We also consider the error in the $H^m$ norm, for arbitrary $m\in\N_0$. 
We could use the same strategy of proof as in those references when $p=2$, 
however we need to use a different strategy when $p\neq 2$ and directly consider the general case $p\in[1,\infty)$.

\begin{remark}
If the initial condition $u_0$ and the potential $V$ are less regular than in Theorem~\ref{th-ms}, it is possible to obtain the following result: assume that $u_0\in H^{m+2}$ and that $V\in\mathcal{C}^{m+2}$, then
\[
\underset{0\le n\le N}\sup~\left(\E\left[\norm{u_n-u(t_n)}_{H^m}^{p}\right]\right)^{\frac1p}\leq C_{m,p}(T,\norm{u_0}_{H^{m+2}})\tau^{\frac12}.
\]
\end{remark}

As immediate consequences of the main result of this article we obtain the following corollaries. 
\begin{corollary}
Under the assumptions of Theorem~\ref{th-ms}, one obtains the following error estimate: for all $\varepsilon\in(0,1)$, there exists $C_{m,p,\varepsilon}(T,\norm{u_0}_{H^{m+4}})\in(0,\infty)$ such that, for all $\tau\in(0,1)$, one has
\begin{equation}\label{error-sup}
\left(\E\left[\underset{0\le n\le N}\sup~\norm{u_n-u(t_n)}_{H^m}^{p}\right]\right)^{\frac1p}\leq C_{m,p,\varepsilon}(T,\norm{u_0}_{H^{m+4}})\tau^{1-\varepsilon}.
\end{equation}
\end{corollary}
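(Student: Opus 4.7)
The plan is to derive the supremum-inside estimate \eqref{error-sup} directly from Theorem~\ref{th-ms} by a standard moment-inflation trick, exploiting the fact that the main theorem gives $p$-th moment control for every $p\in[1,\infty)$ and not only for the value of $p$ appearing in the corollary.

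First I would replace the supremum by a sum of the pointwise errors. Since the finite family $\{\norm{u_n-u(t_n)}_{H^m}\}_{0\le n\le N}$ has at most $N+1$ terms, for any exponent $q\in[1,\infty)$ one has the crude bound
\[
\underset{0\le n\le N}\sup~\norm{u_n-u(t_n)}_{H^m}^{q}\le \sum_{n=0}^{N}\norm{u_n-u(t_n)}_{H^m}^{q}.
\]
Taking expectation and applying Theorem~\ref{th-ms} with moment $q$ yields
\[
\E\Bigl[\underset{0\le n\le N}\sup~\norm{u_n-u(t_n)}_{H^m}^{q}\Bigr]\le (N+1)\,C_{m,q}(T,\norm{u_0}_{H^{m+4}})^{q}\tau^{q}\le C\,T\,\tau^{q-1},
\]
using $N+1\le 2T/\tau$. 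Raising to the power $1/q$, one gets an estimate of order $\tau^{1-1/q}$ for the $L^q$-norm of the supremum.

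Next I would pass back to the prescribed exponent $p$ via Jensen's inequality: if $q\ge p$, then by monotonicity of $L^r$-norms on a probability space,
\[
\Bigl(\E\Bigl[\underset{0\le n\le N}\sup~\norm{u_n-u(t_n)}_{H^m}^{p}\Bigr]\Bigr)^{\frac1p}\le \Bigl(\E\Bigl[\underset{0\le n\le N}\sup~\norm{u_n-u(t_n)}_{H^m}^{q}\Bigr]\Bigr)^{\frac1q}\le C^{\frac1q}\,\tau^{1-\frac1q}.
\]
Given $\varepsilon\in(0,1)$, it suffices to pick an integer $q=q(\varepsilon,p)\ge \max(p,\lceil 1/\varepsilon\rceil+1)$ so that $1/q<\varepsilon$; since $\tau\in(0,1)$, one then has $\tau^{1-1/q}\le \tau^{1-\varepsilon}$, and the constant $C_{m,p,\varepsilon}(T,\norm{u_0}_{H^{m+4}})$ is obtained by absorbing $C^{1/q}$ (which depends on $m,q,T,\norm{u_0}_{H^{m+4}}$, hence on $m,p,\varepsilon,T,\norm{u_0}_{H^{m+4}}$).

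There is no real obstacle here: the only mild point is the loss of $\tau^{1/q}$ coming from bounding the sup by a sum of $O(1/\tau)$ terms, which is precisely why one cannot reach the full rate $\tau$ and instead has to leave an arbitrarily small $\varepsilon>0$ in the exponent. The argument uses nothing beyond Theorem~\ref{th-ms} with arbitrary moment $q$ and Jensen's inequality, and the regularity assumptions on $u_0$ and $V$ are exactly those of Theorem~\ref{th-ms}.
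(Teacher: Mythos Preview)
Your proof is correct and follows essentially the same approach as the paper: both bound the supremum by the sum, apply Theorem~\ref{th-ms} with an inflated moment $q>\max(p,\varepsilon^{-1})$ to obtain the $L^q$ estimate of order $\tau^{1-1/q}$, and then use monotonicity of $L^r$-norms on a probability space to pass back to $L^p$. The only differences are cosmetic (you name Jensen's inequality explicitly and handle the factor $N+1$ with $N+1\le 2T/\tau$, whereas the paper simply uses $T/\tau$).
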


\begin{proof}
The second error estimate~\eqref{error-sup} follows from the first error estimate~\eqref{error} by an elementary argument. 
Let $\varepsilon\in(0,1)$ and $p\in[1,\infty)$, and choose $q>\max(p,\varepsilon^{-1})$, then using~\eqref{error} one obtains
\begin{align*}
\E\left[\underset{0\le n\le N}\sup~\norm{u_n-u(t_n)}_{H^m}^{q}\right]&\le \sum_{n=0}^{N}\E\left[\norm{u_n-u(t_n)}_{H^m}^{q}\right]\\
&\le \frac{T}{\tau}\bigl(C_{m,q}(T,\norm{u_0}_{H^{m+4}})\tau)^{q}\\
&\le TC_{m,q}(T,\norm{u_0}_{H^{m+4}})^q\tau^{q(1-\frac{1}{q})}\\
&\le C_{m,q}(T,\norm{u_0}_{H^{m+4}})\tau^{q(1-\varepsilon)},
\end{align*}
where we recall that $\tau=T/N$. 
Finally one obtains~\eqref{error-sup} as follows:
\[
\left(\E\left[\underset{0\le n\le N}\sup~\norm{u_n-u(t_n)}_{H^m}^{p}\right]\right)^{\frac1p}\le \left(\E\left[\underset{0\le n\le N}\sup~\norm{u_n-u(t_n)}_{H^m}^{q}\right]\right)^{\frac1q}\le C_{m,p,\varepsilon}(T,\norm{u_0}_{H^{m+4}})\tau^{1-\varepsilon}.
\]
\end{proof}

The argument described above gives a slight reduction in the order of convergence from $1$ to $1-\varepsilon$, with arbitrarily small $\varepsilon>0$. It may be possible to obtain~\eqref{error-sup} with $\varepsilon=0$ using refined arguments in the analysis of the error. To keep the presentation simple, this is not performed in the sequel.

The fact that the first error estimate~\eqref{error} holds with arbitrarily large $p$ is important and allows us to choose arbitrarily small $\varepsilon$. If one applies the argument detailed above only when $p=2$ for instance, one obtains an order of convergence $\frac12$ in~\eqref{error-sup}.

\begin{corollary}\label{cor:pas}
Consider the stochastic Schr\"odinger equation \eqref{prob} on the time interval $[0,T]$ 
with solution denoted by $\left(u(t)\right)_{t\in[0,T]}$. 
Let $u_n$ be the numerical solution given by the splitting scheme \eqref{split} with time-step size $\tau$. 
Under the assumptions of Theorem~\ref{th-ms}, one has convergence in probability of order one
\[
\underset{C\to\infty}\lim~\underset{\tau\in(0,1)}\sup~\mathbb{P}\left(\norm{u_N-u(T)}_{H^m}\ge C\tau\right)=0,
\]
where we recall that $T=N\tau$. 

Moreover, consider the sequence of time-step sizes given by $\tau_L=\frac{T}{2^L}$, $L\in\N$. 
Then, for every $\varepsilon\in(0,1)$, there exists an almost surely finite random variable 
$C_\varepsilon$, such that for all $L\in\N$ one has
\[
\norm{u_{2^L}-u(T)}_{H^m}\le C_{\varepsilon}\left(\frac{T}{2^L}\right)^{1-\varepsilon}.
\]
\end{corollary}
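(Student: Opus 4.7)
The plan is to derive both statements directly from the strong convergence estimate \eqref{error} of Theorem~\ref{th-ms}, using Markov's inequality for the probabilistic part and the Borel--Cantelli lemma for the almost sure part. Neither step requires any new analysis of the scheme; everything reduces to choosing the moment $p$ appropriately.

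For the convergence in probability of order one, I would simply apply Markov's inequality with $p=1$: for any $C>0$ and any $\tau\in(0,1)$,
\[
\mathbb{P}\bigl(\norm{u_N-u(T)}_{H^m}\ge C\tau\bigr)\le \frac{1}{C\tau}\,\E[\norm{u_N-u(T)}_{H^m}]\le \frac{C_{m,1}(T,\norm{u_0}_{H^{m+4}})}{C},
\]
where the last inequality uses \eqref{error} with $p=1$ and $n=N$. The right-hand side is independent of $\tau$ and tends to $0$ as $C\to\infty$, which yields the claim.

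For the almost sure rate along the dyadic sequence $\tau_L=T/2^L$, I would fix $\varepsilon\in(0,1)$ and choose an integer $p\ge1$ large enough that $p\varepsilon>1$. Applying Markov's inequality and then \eqref{error} with $\tau=\tau_L$ and $n=2^L$ gives
\[
\mathbb{P}\Bigl(\norm{u_{2^L}-u(T)}_{H^m}\ge \tau_L^{1-\varepsilon}\Bigr)
\le \frac{\E[\norm{u_{2^L}-u(T)}_{H^m}^p]}{\tau_L^{p(1-\varepsilon)}}
\le C_{m,p}(T,\norm{u_0}_{H^{m+4}})^p\,\tau_L^{p\varepsilon}.
\]
Since $\sum_{L\ge 1}\tau_L^{p\varepsilon}=\sum_{L\ge 1}(T/2^L)^{p\varepsilon}<\infty$ thanks to the choice $p\varepsilon>1$, the Borel--Cantelli lemma ensures that, outside a set of probability zero, the event $\{\norm{u_{2^L}-u(T)}_{H^m}\ge \tau_L^{1-\varepsilon}\}$ occurs only for finitely many $L$. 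Setting
\[
C_\varepsilon=\sup_{L\in\N}\frac{\norm{u_{2^L}-u(T)}_{H^m}}{\tau_L^{1-\varepsilon}}
\]
then defines an almost surely finite random variable such that $\norm{u_{2^L}-u(T)}_{H^m}\le C_\varepsilon\,(T/2^L)^{1-\varepsilon}$ for every $L\in\N$.

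There is essentially no hard step here: both parts are routine consequences of a strong rate in $L^p$ for arbitrary $p\in[1,\infty)$. The only minor point requiring care is that Theorem~\ref{th-ms} must be applicable with a sufficiently large $p$ (which it is, since it holds for every $p\in[1,\infty)$), so that $p\varepsilon>1$ can be arranged for arbitrarily small $\varepsilon$; this is precisely what enables the sharp a.s.\ rate $\tau^{1-\varepsilon}$ and what fails if only a mean-square estimate were available.
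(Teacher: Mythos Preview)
Your proof is correct, and the first part (convergence in probability via Markov's inequality with $p=1$) is identical to the paper's argument.

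For the almost sure part, the paper takes a slightly different and more economical route: it uses only $p=1$, observing that
\[
\sum_{L\ge 1}\frac{\E\bigl[\norm{u_{2^L}-u(T)}_{H^m}\bigr]}{\tau_L^{1-\varepsilon}}
\le C_{m,1}(T,\norm{u_0}_{H^{m+4}})\sum_{L\ge 1}\tau_L^{\varepsilon}<\infty,
\]
which forces $\norm{u_{2^L}-u(T)}_{H^m}/\tau_L^{1-\varepsilon}\to 0$ almost surely. Your Borel--Cantelli argument is perfectly valid, but note that the condition $p\varepsilon>1$ is stronger than necessary: because $\tau_L=T/2^L$ decreases geometrically, the series $\sum_L\tau_L^{p\varepsilon}$ already converges for \emph{any} $p\varepsilon>0$, so $p=1$ suffices. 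Consequently your closing remark---that the availability of arbitrarily large $p$ in Theorem~\ref{th-ms} is ``precisely what enables'' the rate $\tau^{1-\varepsilon}$---is not accurate for this dyadic sequence (it would be the right intuition for, say, $\tau_L=T/L$). The result along dyadic step sizes follows from the $L^1$ bound alone; the full range of $p$ is genuinely needed only for the estimate~\eqref{error-sup} in the preceding corollary.
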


\begin{proof}
The result on convergence in probability is a straightforward consequence of 
Markov's inequality followed by Theorem~\ref{th-ms}:
\begin{align*}
\mathbb{P}\left(\norm{u_N-u(T)}_{H^m}\ge C\tau\right)&\le 
\frac{\E\left[\norm{u_N-u(T)}_{H^m}\right]}{C\tau}=\frac{C_{m,1}(T,\norm{u_0}_{H^{m+4}})}{C}\underset{C\to\infty}\to 0.
\end{align*}
To get the result on almost sure convergence, it suffices to observe that (again by applying Theorem~\ref{th-ms})
\[
\sum_{\ell=0}^{\infty}\frac{\E\left[\norm{u_{2^{\ell}}-u(T)}_{H^m}\right]}{\tau_{\ell}^{1-\varepsilon}}<\infty,
\]
thus $\frac{\norm{u_{2^L}-u(T)}_{H^m}}{\tau_L^{1-\varepsilon}}\underset{L\to\infty}\to 0$ almost surely. 
\end{proof}

\section{Error analysis: proof of Theorem~\ref{th-ms}}\label{sec-error}

Before proceeding with the proof of the error estimates~\eqref{error},  
let us state and prove an auxiliary result on the mappings $\Psi_0(u)=V[u]u$ and 
$\Psi_\tau(u)=\frac{\Phi_\tau(u)-u}{\ii\tau}$.

\begin{lemma}\label{techlemma}
Let $m\in\N_0$ and assume that $V\in\mathcal{C}^m$.

There exists $C_m(V)\in(0,\infty)$ such that for all $u\in H^m$ and all $\tau\in(0,1)$,
one has
\[
\norm{\Psi_\tau(u)-\Psi_0(u)}_{H^m}\le C_m(V)\tau\left(1+\norm{u}_{L^2}^{\max(4,2m)}\right)\norm{u}_{H^m}.
\]
\end{lemma}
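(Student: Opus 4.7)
The plan is to explicitly rewrite $\Psi_\tau(u) - \Psi_0(u)$ as a multiplication operator acting on $u$, then reduce the estimate to a Leibniz-rule bound on the multiplier in $\mathcal{C}^m$, exactly in the style of Lemma~\ref{lemma-bound}.

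First, I would observe that $\Psi_\tau(u) = \frac{e^{\ii\tau V[u]}-1}{\ii\tau}\,u$, so that
\[
\Psi_\tau(u) - \Psi_0(u) = \Bigl(\frac{e^{\ii\tau V[u]}-1}{\ii\tau} - V[u]\Bigr)u = \eta_\tau(V[u])\,u,
\]
where $\eta_\tau(y) = \frac{e^{\ii\tau y}-1-\ii\tau y}{\ii\tau}$ for $y\in\R$. By Lemma~\ref{lem:Leibniz},
\[
\norm{\Psi_\tau(u) - \Psi_0(u)}_{H^m} \le C_m \norm{\eta_\tau(V[u])}_{\mathcal{C}^m} \norm{u}_{H^m},
\]
so everything reduces to controlling $\norm{\eta_\tau(V[u])}_{\mathcal{C}^m}$.

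Next I would record the pointwise bounds on the scalar function $\eta_\tau$ and its derivatives: from the second-order Taylor estimate one has $|\eta_\tau(y)|\le \tfrac{\tau}{2} y^2$, and a direct computation gives $\eta_\tau'(y) = e^{\ii\tau y}-1$ so $|\eta_\tau'(y)|\le \tau |y|$, while for $k\ge 2$, $\eta_\tau^{(k)}(y) = (\ii\tau)^{k-1}e^{\ii\tau y}$ so $|\eta_\tau^{(k)}(y)|\le \tau^{k-1}\le \tau$ (since $\tau\in(0,1)$). The key point is that every derivative of $\eta_\tau$, including the function itself, carries at least one factor of $\tau$.

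Then I would apply the Fa\`a di Bruno formula to $\partial_\gamma(\eta_\tau(V[u]))$ exactly as in the proof of Lemma~\ref{lemma-bound}, using $\norm{V[u]}_{\mathcal{C}^k}\le \norm{V}_{\mathcal{C}^k}\norm{u}_{L^2}^2$ for $0\le k\le m$. For $\gamma=0$ this yields $\norm{\eta_\tau(V[u])}_{\mathcal{C}^0}\le \tfrac{\tau}{2}\norm{V}_{\mathcal{C}^0}^2\norm{u}_{L^2}^4$, which is the source of the $\max(4,2m)$ exponent when $m\le 2$. For $1\le |\gamma|\le m$, Fa\`a di Bruno produces a finite sum of terms of the form $\eta_\tau^{(k)}(V[u])\prod_i \partial_{\beta_i}(V[u])$ with $\sum_i|\beta_i|=|\gamma|$ and $1\le k\le |\gamma|$, each of which is bounded by $C\,\tau\,\norm{V}_{\mathcal{C}^{|\gamma|}}^k\norm{u}_{L^2}^{2k}\le C_m(V)\,\tau\,(1+\norm{u}_{L^2}^{2m})$. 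Combining the two regimes gives
\[
\norm{\eta_\tau(V[u])}_{\mathcal{C}^m}\le C_m(V)\,\tau\,\bigl(1+\norm{u}_{L^2}^{\max(4,2m)}\bigr),
\]
and plugging this into the Leibniz estimate above finishes the proof.

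The only mildly delicate point is the bookkeeping of the polynomial powers of $\norm{u}_{L^2}$ coming out of Fa\`a di Bruno, in particular making sure the $\norm{u}_{L^2}^4$ contribution from the $\gamma=0$ term is not forgotten (this is what forces the $\max(4,2m)$ exponent rather than just $2m$). The analytic content is otherwise elementary, since the crucial structural gain -- the extra factor of $\tau$ compared with Lemma~\ref{lemma-bound} -- is already built into every derivative of $\eta_\tau$.
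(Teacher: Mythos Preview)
Your proposal is correct and follows essentially the same approach as the paper: the paper also writes $\Psi_\tau(u)-\Psi_0(u)=\Theta_\tau(V[u])u$ with $\Theta_\tau(y)=\frac{e^{\ii\tau y}-1-\ii\tau y}{\ii\tau}$ (your $\eta_\tau$), applies the Leibniz inequality to reduce to $\norm{\Theta_\tau(V[u])}_{\mathcal{C}^m}$, records the same pointwise bounds $|\Theta_\tau(y)|\le C\tau y^2$, $|\Theta_\tau'(y)|\le C\tau|y|$, $|\Theta_\tau^{(k)}(y)|\le C\tau$ for $k\ge 2$, and finishes via Fa\`a di Bruno together with $\norm{V[u]}_{\mathcal{C}^{|\gamma|}}\le \norm{V}_{\mathcal{C}^{|\gamma|}}\norm{u}_{L^2}^2$. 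Your remark about the $\gamma=0$ term producing the $\norm{u}_{L^2}^4$ factor that forces the $\max(4,2m)$ exponent is exactly the point.
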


\begin{proof}
Let us first observe that, by definitions of the operators $\Psi_\tau$ and $\Psi_0$, one has
\[
\Psi_\tau(u)-\Psi_0(u)=\Theta_\tau(V[u])u,
\]
where $\Theta_\tau(y)=\frac{\e^{\ii\tau y}-1-\ii\tau y}{\ii\tau}$ for all $y\in\R$.

Applying the inequality from Lemma~\ref{lem:Leibniz}, one obtains
\[
\norm{\Psi_\tau(u)-\Psi_0(u)}_{H^m}\le C_m\norm{\Theta_\tau(V[u])}_{\mathcal{C}^m}\norm{u}_{H^m}.
\]
It remains to study the behavior of $\norm{\Theta_\tau(V[u])}_{\mathcal{C}^m}$.

The auxiliary function $\Theta_\tau$ satisfies the following properties: for all $k\in\N_0$, there exists $C_k\in(0,\infty)$ such that, for all $y\in\R$, one has
\begin{itemize}
\item $|\Theta_\tau(y)|\le C_0\tau|y|^2$
\item $|\Theta_\tau'(y)|\le C_1\tau|y|$
\item $|\Theta_\tau^{(k)}(y)|\le C_k\tau^{k-1}\le C_k\tau$ for all integers $k\geq2$ and all $\tau\in(0,1)$.
\end{itemize}
Using the Fa\`a di Bruno formula, one obtains the bounds
\[
\norm{\partial_\gamma \Theta_\tau(V[u])}_{\mathcal{C}^0}\le C\begin{cases} \tau\norm{V[u]}_{\mathcal{C}^0}^2~,\quad \gamma=0,\\ 
\tau\left(1+\norm{V[u]}_{\mathcal{C}^{|\gamma|}}^{|\gamma|}\right)~,\quad \gamma\neq 0.\end{cases}
\]
Using the inequality
\[\norm{V[u]}_{\mathcal{C}^{|\gamma|}}^{|\gamma|}=\norm{\partial_\gamma V \ast|u|^2}_{\mathcal{C}^{0}}^{|\gamma|}\le C_m(V)\norm{u}_{L^2}^{2m}
\]
if $|\gamma|\le m$ then concludes the proof of Lemma~\ref{techlemma}.

\end{proof}

We are now in position to give the proof of Theorem~\ref{th-ms}.
\begin{proof}[Proof of Theorem~\ref{th-ms}]
Let us first perform a change of unknowns: for all $t\ge 0$ and $n\in\N_0$, set 
\[
v(t)=S(0,t)u(t)=S(t,0)^{-1}u(t)~,\quad\text{and}\quad v_n=S(0,t_n)u_n=S(t_n,0)^{-1}u_n,
\]
where $u(t)$ is the solution of~\eqref{prob} whereas $u_n$ is defined by the splitting scheme~\eqref{split}. Owing to the isometry property~\eqref{preservS} for the random propagator, one has the equality
\[
\norm{u_n-u(t_n)}_{H^m}=\norm{S(t_n,0)\left(v_n-v(t_n)\right)}_{H^m}=\norm{v_n-v(t_n)}_{H^m},
\]
for all $m\in \N_0$ and for all $n\in\N_0$. Thus, it is sufficient to prove estimates for the error $E_n=v_n-v(t_n)$.

Using the mild form~\eqref{mild} for $u(t)$ and the definition of the splitting scheme~\eqref{split}, for all $t\ge 0$ and $n\in\N_0$, one has the following expressions:
\begin{align*}
v(t)&=u_0+\ii\int_0^t S(0,s)\Psi_0(u(s))\,\dd s\\
v(t_{n+1})&=v(t_n)+\ii\int_{t_n}^{t_{n+1}}S(0,t)\Psi_0(u(t))\,\dd t\\
v_{n+1}&=S(0,t_n)\Phi_\tau(u_n)=v_n+\ii\tau S(0,t_n)\Psi_\tau(u_n).
\end{align*}
The expressions above then give the following decomposition of the error:
\[
E_{n+1}=v_{n+1}-v(t_{n+1})=E_n+\epsilon_n^1+\epsilon_n^2+\epsilon_n^3+\epsilon_n^4+\epsilon_n^5,
\]
with local error terms defined by
\begin{align*}
\epsilon_n^1&=\ii\tau S(0,t_n)\left(\Psi_\tau(u_n)-\Psi_0(u_n)\right)\\
\epsilon_n^2&=\ii\tau S(0,t_n)\left(\Psi_0(u_n)-\Psi_0(u(t_n))\right)\\
\epsilon_n^3&=\ii\int_{t_n}^{t_{n+1}}\left(S(0,t)-S(0,t_n)\right)\left(\Psi_0(u(t_n))-\Psi_0(u(t))\right)\,\dd t\\
\epsilon_n^4&=\ii\int_{t_n}^{t_{n+1}}\left(S(0,t_n)-S(0,t)\right)\Psi_0(u(t_n))\,\dd t\\
\epsilon_n^5&=\ii\int_{t_n}^{t_{n+1}}S(0,t_n)\left(\Psi_0(u(t_n))-\Psi_0(u(t))\right)\,\dd t.
\end{align*}
For $j=1,\ldots,5$, set $\displaystyle 
E_n^j=\sum_{k=0}^{n-1}\epsilon_k^j$. Then a straightforward recursion argument yields the equality
\[
E_n=\sum_{j=1}^{5}E_n^j=\sum_{j=1}^{5}\sum_{k=0}^{n-1}\epsilon_k^j.
\]
and applying Minkowski's inequality one obtains, for $p\in[1,\infty)$,
\[
\left(\E\left[\norm{E_n}_{H^m}^p\right]\right)^{\frac1p}\le \sum_{j=1}^{5}\left(\E[\norm{E_n^j}_{H^m}^p]\right)^{\frac1p}.
\]
It remains to prove error estimates for $\left(\E[\norm{E_n^j}_{H^m}^p]\right)^{\frac1p}$, for $j=1,\ldots,5$ and $p\in[2,\infty)$ 
(the case $p\in[1,2)$ is treated using H\"older's inequality). 
The estimates of the error terms for $j=1,2,3$ follow from straightforward arguments, whereas more work is required to deal with the cases $j=4$ and $j=5$ (in order to obtain order of convergence equal to $1$, 
instead of the order $1/2$ corresponding to the temporal H\"older regularity of 
the solution, see Equation~\eqref{tempregul}).

We now provide detailed error estimates of those five terms.

\noindent$\bullet$ Let us start with the first term. Using Minkowski's inequality and the isometry property~\eqref{preservS} of the random propagator, one has
\begin{align*}
\left(\E\left[\norm{E_n^1}_{H^m}^p\right]\right)^{\frac1p}&\le 
\sum_{k=0}^{n-1}\left(\E\left[\norm{\epsilon_k^1}_{H^m}^p\right]\right)^{\frac1p}
\le \sum_{k=0}^{n-1}\left(\E\left[\norm{\ii\tau S(0,t_k)\left(\Psi_\tau(u_k)-\Psi_0(u_k)\right) }_{H^m}^p\right]\right)^{\frac1p}
\\
&\le \tau\sum_{k=0}^{n-1}\left(\E\left[\norm{\Psi_\tau(u_k)-\Psi_0(u_k)}_{H^m}^p\right]\right)^{\frac1p}\\
\end{align*}
Applying Lemma~\ref{techlemma} and using first the preservation of the $L^2$-norm property~\eqref{preservL2-split} for the numerical scheme (Proposition~\ref{prop-norm2}), second the almost sure bound~\eqref{bnd} for the $H^m$-norm of the numerical solution (Proposition~\ref{propbnd}), one finally obtains, for all $n=0,\ldots,N$,
\begin{align*}
\left(\E\left[\norm{E_n^1}_{H^m}^p\right]\right)^{\frac1p}&\le C\tau^2\sum_{k=0}^{n-1}\left(\E\left[\left(1+\norm{u_k}_{L^2}^{\max(4,2m)}\right)^{p}\norm{u_k}_{H^m}^p\right]\right)^{\frac1p}\\
&\le C\tau^2\sum_{k=0}^{n-1}\left(1+\norm{u_0}_{L^2}^{\max(4,2m)}\right)^{p}\left(\E\left[\norm{u_k}_{H^m}^p\right]\right)^{\frac1p}\\
&\le C_{m,p}(T,\norm{u_0}_{H^m})\tau.
\end{align*}

\noindent$\bullet$ For the second term, using Minkowski's inequality and the isometry property~\eqref{preservS} of the random propagator, one has
\begin{align*}
\left(\E\left[\norm{E_n^2}_{H^m}^p\right]\right)^{\frac1p}&\le 
\sum_{k=0}^{n-1}\left(\E\left[\norm{\epsilon_k^2}_{H^m}^p\right]\right)^{\frac1p}
\le \sum_{k=0}^{n-1}\left(\E\left[\norm{\ii\tau S(0,t_k)\left(\Psi_0(u_k)-\Psi_0(u(t_k))\right)}_{H^m}^p\right]\right)^{\frac1p}
\\
&\le \tau\sum_{k=0}^{n-1}\left(\E\left[\norm{\Psi_0(u_k)-\Psi_0(u(t_k))}_{H^m}^p\right]\right)^{\frac1p}.
\end{align*}
Using the local Lipschitz continuity property~\eqref{eq:lemmaNL-Lip} of $\Psi_0$ (Lemma~\ref{lemmaNL}), then the almost sure bounds for the $H^m$ norm of the exact solution (Equation~\eqref{borneHm-ex} from Proposition~\ref{prop-ex}) and of the numerical solution (Equation~\eqref{bnd} from Proposition~\ref{propbnd}), one obtains, for all $n=0,\ldots,N$,
\begin{align*}
\left(\E\left[\norm{E_n^2}_{H^m}^p\right]\right)^{\frac1p}&\le C\tau\sum_{k=0}^{n-1} \left(\E\left[\left(\norm{u_k}_{H^m}^2+\norm{u(t_k)}_{H^m}^2\right)^p
\norm{u_k-u(t_k)}_{H^m}^p\right]\right)^{\frac1p}\\
&\le C(T,\norm{u_0}_{H^m})\tau\sum_{k=0}^{n-1}\left(\E\left[\norm{E_k}_{H^m}^p\right]\right)^{\frac1p}.
\end{align*}

\noindent$\bullet$ In order to estimate the third term, applying Minkowski's inequality yields 
\begin{align*}
\left(\E\left[\norm{E_n^3}_{H^m}^p\right]\right)^{\frac1p}&\le \sum_{k=0}^{n-1}\left(\E\left[\norm{\epsilon_k^3}_{H^m}^p\right]\right)^{\frac1p}.
\end{align*}
Using the inequality~\eqref{arnaud0} (combined with Cauchy--Schwarz's inequality) and the local Lipschitz continuity property~\eqref{eq:lemmaNL-Lip} of $\Psi_0$ (Lemma~\ref{lemmaNL}) then yields
\begin{align*}
\left(\E\left[\norm{\epsilon_k^3}_{H^m}^p\right]\right)^{\frac1p}
&\le \int_{t_k}^{t_{k+1}}\left(\E\left[ \norm{\left(S(0,t_k)-S(0,t)\right)
\left(\Psi_0(u(t))-\Psi_0(u(t_k))\right)}^p_{H^m} \right]\right)^{\frac1p}\,\dd t\\
&\le C\tau^{\frac12}\int_{t_k}^{t_{k+1}}\left(\E\left[\norm{\Psi_0(u(t))-\Psi_0(u(t_k))}_{H^{m+2}}^{2p}\right]\right)^{\frac{1}{2p}}\,\dd t\\
&\le C\tau^{\frac12}\int_{t_k}^{t_{k+1}}\left(\E\left[\left(\norm{u(t)}_{H^{m+2}}^2+\norm{u(t_k)}_{H^{m+2}}^2\right)^{2p}\norm{u(t)-u(t_k)}_{H^{m+2}}^{2p}\right]\right)^{\frac{1}{2p}}\,\dd t.
\end{align*}
Using the almost sure bound~\eqref{borneHm-ex} for the $H^{m+2}$-norm of the exact solution (Proposition~\ref{prop-ex}) and the temporal regularity estimate~\eqref{tempregul}, one finally obtains
\[
\left(\E\left[\norm{\epsilon_k^3}_{H^m}^p\right]\right)^{\frac1p}\le C(T,\norm{u_0}_{H^{m+4}})\tau^{\frac12}\int_{t_k}^{t_{k+1}}(t-t_k)^{\frac12}\,\dd t \le C(T,\norm{u_0}_{H^{m+4}})\tau^2,
\]
and finally one has, for all $n=0,\ldots,N$,
\[
\left(\E\left[\norm{E_n^3}_{H^m}^p\right]\right)^{\frac1p}\le 
\sum_{k=0}^{n-1}\left(\E\left[\norm{\epsilon_k^3}_{H^m}^p\right]\right)^{\frac1p}
\le C_{m,p}(T,\norm{u_0}_{H^{m+4}})\tau.
\]

\noindent$\bullet$ Let us now focus on the the fourth term. As explained above, one needs to be careful to obtain an order of convergence equal to $1$. Indeed, for the fourth term, applying~\eqref{arnaud0} directly (and appropriate bounds) would only give order of convergence $1/2$ of the splitting scheme.

Let us define auxiliary processes: for all $n\in\N_0$ and all $t\in[t_n,t_{n+1}]$, set 
\[
v_n(t)=S(0,t)\Psi_0(u(t_n)).
\] 
Note that $v_n(t)=S(t_n,t)v_n(t_n)=\e^{-\ii\left(\beta(t)-\beta(t_n)\right)\Delta}v_n(t_n)$, for all $t\in[t_n,t_{n+1}]$. As a consequence, the process $\left(v_n(t)\right)_{t\in[t_n,t_{n+1}]}$ is the solution of the linear stochastic evolution equation
\[
\dd v_n(t)=-\ii\Delta v_n(t)\circ \dd\beta(t)=-\ii\Delta v_n(t)\,\dd\beta(t)-\frac12\Delta^2 v_n(t)\,\dd t,
\]
with $v_n(t_n)=S(0,t_n)\Psi_0(u(t_n))$.

The local error term $\epsilon_n^4$ is rewritten as follows in terms of the auxiliary process $v_n$:
\begin{align*}
\epsilon_n^4&=\ii\int_{t_n}^{t_{n+1}}\left(S(0,t_n)-S(0,t)\right)\Psi_0(u(t_n))\,\dd t\\
&=\ii\int_{t_n}^{t_{n+1}}\left(v_n(t_n)-v_n(t)\right)\,\dd t\\
&=\int_{t_n}^{t_{n+1}}\int_{t_n}^{t}\Delta v_n(s)\,\dd\beta(s)\,\dd t-\frac{\ii}{2}\int_{t_n}^{t_{n+1}}\int_{t_n}^{t}\Delta^2 v_n(s)\,\dd s\,\dd t\\
&=\epsilon_n^{4,1}+\epsilon_n^{4,2},
\end{align*}
with
\begin{align*}
\epsilon_n^{4,1}&=\int_{t_n}^{t_{n+1}}\int_{t_n}^{t}\Delta v_n(s)\,\dd\beta(s)\,\dd t\\
\epsilon_n^{4,2}&=-\frac{\ii}{2}\int_{t_n}^{t_{n+1}}\int_{t_n}^{t}\Delta^2 v_n(s)\,\dd s\,\dd t.
\end{align*}
Set also $\displaystyle E_n^{4,1}=\sum_{k=0}^{n-1}\epsilon_k^{4,1}$ and 
$\displaystyle E_n^{4,2}=\sum_{k=0}^{n-1}\epsilon_k^{4,2}$. Using Minkowski's inequality, one then gets
\[
\left(\E\left[\norm{E_n^4}_{H^m}^p\right]\right)^{\frac1p}\le \left(\E\left[\norm{E_n^{4,1}}_{H^m}^p\right]\right)^{\frac1p}+\left(\E\left[\norm{E_n^{4,2}}_{H^m}^p\right]\right)^{\frac1p}.
\]
On the one hand, observe that applying the stochastic Fubini theorem gives the equality
\[
\epsilon_n^{4,1}=\int_{t_n}^{t_{n+1}}\int_{s}^{t_{n+1}}\Delta v_n(s)\,\dd t\,\dd\beta(s)=\int_{t_n}^{t_{n+1}}(t_{n+1}-s)\Delta v_n(s)\,\dd\beta(s).
\]
Introduce the (adapted) auxiliary process $\overline{v}$, such that $\overline{v}(s)=(t_{k+1}-s)v_k(s)$ for all $s\in[t_k,t_{k+1}]$ and $k=0,\ldots,N-1$. Then the error term $E_n^{4,1}$ is rewritten as the It\^o integral
\[
E_n^{4,1}=\int_{0}^{t_n}\Delta \overline{v}(s)\,\dd \beta(s),
\]
and applying the Burkholder--Davis--Gundy and H\"older inequalities, for all $p\ge 2$, 
one obtains 
\begin{align*}
\left(\E\left[\norm{E_n^{4,1}}_{H^m}^p\right]\right)^{\frac1p}&=\left(\E\left[\norm{\int_{0}^{t_n}\Delta \overline{v}(s)\,\dd\beta(s)}_{H^m}^{p}\right]\right)^{\frac1p}\\
&\le C_p(T)\left(\int_{0}^{t_n}\E\left[\norm{\Delta \overline{v}(s)}_{H^m}^p\right]\,\dd s\right)^{\frac1p}\\
&\le C_p(T)\left(\int_{0}^{t_n}\E\left[\norm{\overline{v}(s)}_{H^{m+2}}^p\right]\,\dd s\right)^{\frac1p}.
\end{align*}
By definition of $\overline{v}(s)$ and of $v_k(s)$, one obtains
\begin{align*}
\int_{0}^{t_n}\E\left[\norm{\overline{v}(s)}_{H^{m+2}}^p\right]\,\dd s&=\sum_{k=0}^{n-1}\int_{t_{k}}^{t_{k+1}}\E\left[\norm{(t_{k+1}-s)v_k(s)}_{H^{m+2}}^p\right]\,\dd s\\
&\le \tau^p\sum_{k=0}^{n-1}\int_{t_{k}}^{t_{k+1}}\E\left[\norm{S(0,s)\Psi_0(u(t_k))}_{H^{m+2}}^p\right]\,\dd s\\
&\le \tau^{p+1}\sum_{k=0}^{n-1}\E\left[\norm{\Psi_0(u(t_k))}_{H^{m+2}}^p\right]\\
&\le C(T,\norm{u_0}_{H^{m+2}})\tau^p,
\end{align*}
using the isometry property~\eqref{preservS}, the inequality~\eqref{eq:lemmaNL-bound} and the exact preservation of the $L^2$ norm~\eqref{preservL2-ex} as well as the almost sure bound~\eqref{borneHm-ex} for the $H^{m+2}$ norm of the exact solution, 
see Proposition~\ref{prop-ex}.

On the other hand, for the second term, using Minkowski's inequality and the definition of the auxiliary processes $v_k$, one obtains
\begin{align*}
\left(\E\left[\norm{E_n^{4,2}}_{H^m}^p\right]\right)^{\frac1p}&\le \sum_{k=0}^{n-1}
\left(\E\left[\norm{\epsilon_k^{4,2}}_{H^m}^p\right]\right)^{\frac1p}\\
&\le \frac12\sum_{k=0}^{n-1}\int_{t_k}^{t_{k+1}}\int_{t_k}^{t}\left(\E\left[\norm{v_k(s)}_{H^{m+4}}^p\right]\right)^{\frac1p}\,\dd s
\,\dd t\\
&\le C\tau^2\sum_{k=0}^{n-1}\left(\E\left[\norm{S(0,t_k)\Psi_0(u(t_k))}_{H^{m+4}}^{p}\right]\right)^{\frac1p}\\
&\le C(T,\norm{u_0}_{H^{m+4}})\tau, 
\end{align*}
using the isometry property~\eqref{preservS}, the inequality~\eqref{eq:lemmaNL-bound} and the almost sure bound~\eqref{borneHm-ex} for the $H^{m+4}$ norm of the exact solution.

Gathering the estimates, one obtains the following estimate for the fourth error term: for all $n=0,\ldots,N$,
$$
\left(\E\left[\norm{E_n^4}_{H^m}^p\right]\right)^{\frac1p}\le C_{m,p}(T,\norm{u_0}_{H^{m+4}})\tau.
$$

\noindent$\bullet$ It remains to deal with the fifth error term. Using a second-order Taylor expansion, one has, for $t\in[t_n,t_{n+1}]$,
\begin{align*}
\Psi_0(u(t))-\Psi_0(u(t_n))&=\Psi_0'(u(t_n)).\left(u(t)-u(t_n)\right)\\
&\quad+\int_{0}^{1}(1-\xi)\Psi_0''((1-\xi)u(t_n)+\xi u(t)).(u(t)-u(t_n),u(t)-u(t_n))\,\dd\xi\\
&=\Psi_0'(u(t_n)).\left(\left(S(t,t_n)-I\right)u(t_n)+\ii\int_{t_n}^{t}S(t,s)\Psi_0(u(s))\,\dd s\right)+R_n(t),
\end{align*}
using the mild formulation~\eqref{mild} for the exact solution, where one has defined the auxiliary quantity
$R_n(t)=\displaystyle\int_{0}^{1}(1-\xi)\Psi_0''((1-\xi)u(t_n)+\xi u(t)).(u(t)-u(t_n),u(t)-u(t_n))\,\dd\xi$.

For all $n=0,\ldots,N-1$, set 
\begin{align*}
\epsilon_n^{5,1}&=-\ii\int_{t_n}^{t_{n+1}}S(0,t_n)\Psi_0'(u(t_n)).\left(\left(S(t,t_n)-I\right)u(t_n)\right)\,\dd t\\
\epsilon_n^{5,2}&=\int_{t_n}^{t_{n+1}}S(0,t_n)\Psi_0'(u(t_n)).\left(\int_{t_n}^{t}S(t,s)\Psi_0(u(s))\,\dd s\right)  \,\dd t\\
\epsilon_n^{5,3}&=-\ii\int_{t_n}^{t_{n+1}}S(0,t_n)R_n(t)\,\dd t,
\end{align*}
and $\displaystyle E_n^{5,j}=\sum_{k=0}^{n-1}\epsilon_n^{5,j}$, $j=1,2,3$. Note that $\epsilon_n^{5}=\epsilon_n^{5,1}+\epsilon_n^{5,2}+\epsilon_n^{5,3}$, and $E_n^5=E_n^{5,1}+E_n^{5,2}+E_n^{5,3}$, for all $n=0,\ldots,N-1$, 
and Minkowski's inequality yields
\[
\left(\E\left[\norm{E_n^5}_{H^m}^p\right]\right)^{\frac1p}\le \left(\E\left[\norm{E_n^{5,1}}_{H^m}^p\right]\right)^{\frac1p}+\left(\E\left[\norm{E_n^{5,2}}_{H^m}^p\right]\right)^{\frac1p}+\left(\E\left[\norm{E_n^{5,3}}_{H^m}^p\right]\right)^{\frac1p}.
\]
It remains to obtain estimates for each of the three error terms in the right-hand side above.

\noindent$(i)$ To treat the first error terms $E_n^{5,1}$ and $\epsilon_n^{5,1}$, one follows the same strategy as for the error terms $E_n^{4,1}$ and $\epsilon_n^{4,1}$ above. Let us define auxiliary processes: for all $n\in\N_0$ and $t\in[t_n,t_{n+1}]$, set
\[
w_n(t)=S(t,t_n)u(t_n).
\]
For each $n\in\N_0$, the process $\left(w_n(t)\right)_{t\in[t_n,t_{n+1}]}$ is the solution of the linear stochastic evolution
\[
\dd w_n(t)=\ii\Delta w_n(t) \circ \dd\beta(t)=\ii\Delta w_{n}(t)\,\dd\beta(s)-\frac12\Delta^2w_{n}(t)\,\dd t, 
\]
with initial value $w_n(t_n)=u(t_n)$, see the definition~\eqref{defS} of the random propagator $S(t,s)$. The local error term $\epsilon_n^{5,1}$ is rewritten as follows in terms of the auxiliary process $w_n$: 
\begin{align*}
\epsilon_n^{5,1}&=-\ii\int_{t_n}^{t_{n+1}}S(0,t_{n})\Psi_0'(u(t_n)).\left(S(t,t_n)-I\right)u(t_n)\,\dd t\\
&=-\ii\int_{t_n}^{t_{n+1}}S(0,t_n)\Psi_0'(u(t_n)).\left(w_n(t)-w_n(t_n)\right)\,\dd t\\
&=\int_{t_n}^{t_{n+1}}S(0,t_n)\Psi_0'(u(t_n)).\int_{t_n}^{t}\Delta w_n(s)\,\dd\beta(s)\,\dd t\\
&+\frac{\ii}{2}\int_{t_n}^{t_{n+1}}S(0,t_n)\Psi_0'(u(t_n)).\int_{t_n}^{t}\Delta^2 w_n(s)\,\dd s\,\dd t\\
&=\epsilon_n^{5,1,1}+\epsilon_n^{5,1,2},
\end{align*}
with 
\begin{align*}
\epsilon_n^{5,1,1}&=\int_{t_n}^{t_{n+1}}S(0,t_n)\Psi_0'(u(t_n)).\int_{t_n}^{t}\Delta w_n(s)\,\dd\beta(s)\,\dd t\\
\epsilon_n^{5,1,2}&=\frac{\ii}{2}\int_{t_n}^{t_{n+1}}S(0,t_n)\Psi_0'(u(t_n)).\int_{t_n}^{t}\Delta^2 w_n(s)\,\dd s\,\dd t.
\end{align*}
Set also $E_n^{5,1,1}=\sum_{k=0}^{n-1}\epsilon_k^{5,1,1}$ and $E_n^{5,1,2}=\sum_{k=0}^{n-1}\epsilon_k^{5,1,2}$. Using Minkowski's inequality, one then gets
\[
\left(\E\left[\norm{E_n^{5,1}}_{H^m}^p\right]\right)^{\frac1p}\le \left(\E\left[\norm{E_n^{5,1,1}}_{H^m}^p\right]\right)^{\frac1p}+\left(\E\left[\norm{E_n^{5,1,2}}_{H^m}^p\right]\right)^{\frac1p}.
\]
On the one hand, observe that applying the stochastic Fubini theorem gives the equality
\begin{align*}
\epsilon_n^{5,1,1}&=\int_{t_n}^{t_{n+1}}S(0,t_n)\Psi_0'(u(t_n)).\int_{t_n}^{t}\Delta w_n(s)\,\dd\beta(s)\,\dd t\\
&=S(0,t_n)\Psi_0'(u(t_n)).\int_{t_n}^{t_{n+1}}\int_{s}^{t_{n+1}}\Delta w_n(s)\,\dd t\,\dd\beta(s)\\
&=S(0,t_n)\Psi_0'(u(t_n)).\int_{t_n}^{t_{n+1}}(t_{n+1}-s)\Delta w_n(s)\,\dd\beta(s).
\end{align*}
Introduce the (adapted) auxiliary process $\overline{w}$, such that 
$\overline{w}(s)=(t_{k+1}-s)S(0,t_k)\Psi'_0(u(t_k)).(\Delta w_k(s))$ 
for all $s\in[t_k,t_{k+1}]$ and $k=0,\ldots,N-1$. Then the error term $E_n^{5,1,1}$ is rewritten as the It\^o integral
\[
E_n^{5,1,1}=\int_{0}^{t_n}\overline{w}(s)\,\dd\beta(s),
\]
and applying the Burkholder--Davis--Gundy and H\"older's inequalities, 
for all $p\ge 2$, one obtains 
\begin{align*}
\left(\E\left[\norm{E_n^{5,1,1}}_{H^m}^p\right]\right)^{\frac1p}&=\left(\E\left[\norm{\int_{0}^{t_n}\overline{w}(s)\,\dd\beta(s)}_{H^m}^{p}\right]\right)^{\frac1p}\\
&\le C_p(T)\left(\int_{0}^{t_n}\E\left[\norm{\overline{w}(s)}_{H^{m}}^p\right]\,\dd s\right)^{\frac1p}.
\end{align*}
By definition of $\overline{w}(s)$ and of $w_k(s)$, one obtains
\begin{align*}
\int_{0}^{t_n}\E\left[\norm{\overline{w}(s)}_{H^{m}}^p\right]\,\dd s&=\sum_{k=0}^{n-1}\int_{t_{k}}^{t_{k+1}}\E\left[\norm{(t_{k+1}-s)S(0,t_k)\Psi'_0(u(t_k)).\Delta w_k(s)}_{H^{m}}^p\right]\,\dd s\\
&\le C\tau^p\sum_{k=0}^{n-1}\int_{t_{k}}^{t_{k+1}}\E\left[\norm{\Psi'_0(u(t_k)).\Delta S(s,t_k)u(t_k)}_{H^{m}}^p\right]
\,\dd s\\
&\le C\tau^{p+1}\sum_{k=0}^{n-1}\E\left[\norm{u(t_k)}_{H^{m}}^{2p}\norm{\Delta u(t_k)}_{H^{m}}^p\right]\\
&\le C\tau^{p+1}\sum_{k=0}^{n-1}\E\left[\norm{u(t_k)}_{H^{m+2}}^{3p}\right]\\
&\le C(T,\norm{u_0}_{H^{m+2}})\tau^p,
\end{align*}
using the isometry property~\eqref{preservS}, the inequality~\eqref{eq:lemmaNL-d1} and the almost sure bound~\eqref{borneHm-ex} for 
the $H^{m+2}$ norm of the exact solution.

On the other hand, for the second term, using Minkowski's inequality and the definition of the auxiliary processes $w_k$, one obtains
\begin{align*}
\left(\E\left[\norm{E_n^{5,1,2}}_{H^m}^p\right]\right)^{\frac1p}&\le \sum_{k=0}^{n-1}
\left(\E\left[\norm{\epsilon_k^{5,1,2}}_{H^m}^p\right]\right)^{\frac1p}\\
&\le \frac12\sum_{k=0}^{n-1}\int_{t_k}^{t_{k+1}}\int_{t_k}^{t}\left(\E\left[\norm{S(0,t_k)\Psi'_0(u(t_k)).\Delta^2 S(s,t_k)u(t_k)}_{H^{m}}^p\right]\right)^{\frac1p}\,\dd s
\,\dd t\\
&\le C\tau^{2}\sum_{k=0}^{n-1}\left(\E\left[\norm{u(t_k)}_{H^m}^{2p}\norm{\Delta^2 u(t_k)}_{H^{m}}^{p}\right]\right)^{\frac1p}\\
&\le C\tau^2\sum_{k=0}^{n-1}\left(\E\left[\norm{u(t_k)}_{H^{m+4}}^{3p}\right]\right)^{\frac1p}\\
&\le C(T,\norm{u_0}_{H^{m+4}})\tau, 
\end{align*}
using the isometry property~\eqref{preservS}, the inequality~\eqref{eq:lemmaNL-bound} and the almost sure bound~\eqref{borneHm-ex} for the $H^{m+4}$ norm of the exact solution.

Gathering the estimates for $E_n^{5,1,1}$ and $E_n^{5,1,2}$, one finally obtains, for all $n=0,\ldots,N$,
\[
\left(\E\left[\norm{E_n^{5,1}}_{H^m}^p\right]\right)^{\frac1p}\le C_{m,p}(T,\norm{u_0}_{H^{m+4}})\tau.
\]

\noindent$(ii)$ To deal with the error terms $E_n^{5,2}$ and $\epsilon_n^{5,2}$, using Minkowski's inequality, the isometry property~\eqref{preservS} and the inequalities~\eqref{eq:lemmaNL-d1} and~\eqref{eq:lemmaNL-bound} (Lemma~\ref{lemmaNL}), one obtains
\begin{align*}
\left(\E\left[\norm{E_n^{5,2}}_{H^m}^p\right]\right)^{\frac1p}&\le \sum_{k=0}^{n-1}\left(\E\left[\norm{\epsilon_k^{5,2}}_{H^m}^p\right]\right)^{\frac1p}\\
&\le \sum_{k=0}^{n-1}\int_{t_k}^{t_{k+1}}\int_{t_k}^{t}
\left(\E\left[\norm{S(0,t_k)\Psi_0'(u(t_k)).\left(S(t,s)\Psi_0(u(s))\right)}_{H^m}^p\right]\right)^{\frac1p}\,\dd s\,\dd t\\
&\le \sum_{k=0}^{n-1}\int_{t_k}^{t_{k+1}}\int_{t_k}^{t}
\left(\E\left[\norm{u(t_k)}_{H^m}^{2p}\norm{\Psi_0(u(s))}_{H^m}^p\right]\right)^{\frac1p}\,\dd s\,\dd t\\
&\le C\tau \sum_{k=0}^{n-1}\int_{t_k}^{t_{k+1}}
\left(\E\left[\norm{u(t_k)}_{H^m}^{2p}\norm{u(s)}_{L^2}^{2p}\norm{u(s)}_{H^m}^p\right]\right)^{\frac1p}\,\dd s.
\end{align*}
Finally using the almost sure bound~\eqref{borneHm-ex}
for the $H^m$ norm of the exact solution, one obtains, for all $n=0,\ldots,N$
\[
\left(\E\left[\norm{E_n^{5,2}}_{H^m}^p\right]\right)^{\frac1p}\le C_{m,p}(T,\norm{u_0}_{H^m})\tau.
\]

\noindent$(iii)$ To deal with the error terms $E_n^{5,3}$ and $\epsilon_n^{5,3}$, using Minkowski's inequality gives 
\begin{align*}
\left(\E\left[\norm{E_n^{5,3}}_{H^m}^p\right]\right)^{\frac1p}&\le \sum_{k=0}^{n-1}\left(\E\left[\norm{\epsilon_k^{5,3}}_{H^m}^p\right]\right)^{\frac1p}\\
&\le \sum_{k=0}^{n-1}\int_{t_k}^{t_{k+1}}\left(\E\left[\norm{S(0,t_k)R_k(t)}_{H^m}^p\right]\right)^{\frac1p}\,\dd t.
\end{align*}
Using the isometry property~\eqref{preservS}, the inequality~\eqref{eq:lemmaNL-d2} (Lemma~\ref{lemmaNL}), one obtains
\begin{align*}
\left(\E\left[\norm{E_n^{5,3}}_{H^m}^p\right]\right)^{\frac1p}&\le C\sum_{k=0}^{n-1}\int_{t_k}^{t_{k+1}}
\left(\E\left[\left(\norm{u(t)}_{H^m}+\norm{u(t_k)}_{H^m}\right)^{p}\norm{u(t)-u(t_k)}_{H^m}^{2p}\right]\right)^{\frac1p}\,\dd t\\
&\le C_{p}(T,\norm{u_0}_{H^m})\sum_{k=0}^{n-1}\int_{t_k}^{t_{k+1}}\left(\E\left[\norm{u(t)-u(t_k)}_{H^m}^{2p}\right]\right)^{\frac1p}\,\dd t\\
&\le C_{m,p}(T,\norm{u_0}_{H^{m+2}})\tau,
\end{align*}
where the almost sure bound~\eqref{borneHm-ex} for the $H^m$ norm of the exact solution and the temporal regularity estimate~\eqref{tempregul} (Proposition~\ref{prop-ex}) have been used.

Gathering the estimates, one finally obtains the last required result: for all $n=0,\ldots,N$
$$
\left(\E\left[\norm{E_n^5}_{H^m}^p\right]\right)^{\frac1p}\le C_{m,p}(T,\norm{u_0}_{H^{m+4}})\tau.
$$

\noindent$\bullet$ We are now in position to obtain the error estimate~\eqref{error}. Gathering the previously obtained estimates, one has, for all $n=0,\ldots,N$
\begin{align*}
\left(\E\left[\norm{E_n}_{H^m}^p\right]\right)^{\frac1p}&\le \sum_{j=1}^5\left(\E\left[\norm{E_n^j}_{H^m}^p\right]\right)^{\frac1p}\\
&\le C_{m,p}(T,\norm{u_0}_{H^{m+4}})\tau+
C(T,\norm{u_0}_{H^m})\tau\sum_{k=0}^{n-1}\left(\E\left[\norm{E_k}_{H^m}^p\right]\right)^{\frac1p}.
\end{align*}
Applying the discrete Gronwall Lemma then gives~\eqref{error} and concludes the proof of Theorem~\ref{th-ms}.

\end{proof}

\section{Numerical experiments}\label{sec-numexp}
We present some numerical experiments in order to support and illustrate 
the above theoretical results. In addition, we shall compare 
the behavior of the splitting scheme \eqref{split} 
(denoted by \textsc{Split} below) with the following time integrators for the stochastic nonlinear Schr\"odinger equation \eqref{prob}
\begin{itemize}
\item the stochastic exponential integrator from \cite{MR3736655} 
(adapted to the present nonlocal interaction cubic nonlinearity, denoted by \textsc{Exp})
$$
u_{n+1}=S(t_{n+1},t_n)u_n+\ii \tau S(t_{n+1},t_n)V[u_n]u_n.
$$
\item the semi-implicit midpoint scheme (denoted \textsc{Mid})
$$
\ii\frac{u_{n+1}-u_n}{\tau}+\frac{\chi_n}{\sqrt{\tau}}\Delta u_{n+1/2}+V[u_n]u_n=0,
$$
where $u_{n+1/2}=\frac12\left(u_n+u_{n+1}\right)$ and $\chi_n=\frac{\beta(t_{n+1})-\beta(t_n)}{\sqrt{\tau}}$. 
This is a modification of the Crank--Nicolson from \cite{MR3312594} for the nonlinear interaction nonlinearity studied here. 
\end{itemize}
Unless stated otherwise, we consider the SPDE \eqref{prob} with the potential $V(x)=\cos(x)$ 
on the one dimensional torus with periodic boundary conditions $[0,2\pi]$. 
The spatial discretization is done by a pseudospectral method with
$M$ Fourier modes. The initial value is given by $u_0(x)=\exp(-0.5(x-\pi)^2)$.

\subsection{Evolution plots}
To illustrate the interplay and the balance between the random dispersion 
and the nonlinearity, in Figure~\ref{fig:evo}, we display the evolution of $|u_n|^2$ 
along one sample of the numerical solution obtained by the splitting integrator \eqref{split}.
The discretization parameters are $\tau=2^{-14}$ and $M=2^{10}$ and the time interval is given by $[0,0.5]$.

\begin{figure}
\centering
\begin{subfigure}[b]{0.4\textwidth}
\hspace{-1cm}\includegraphics[height=5cm,keepaspectratio]{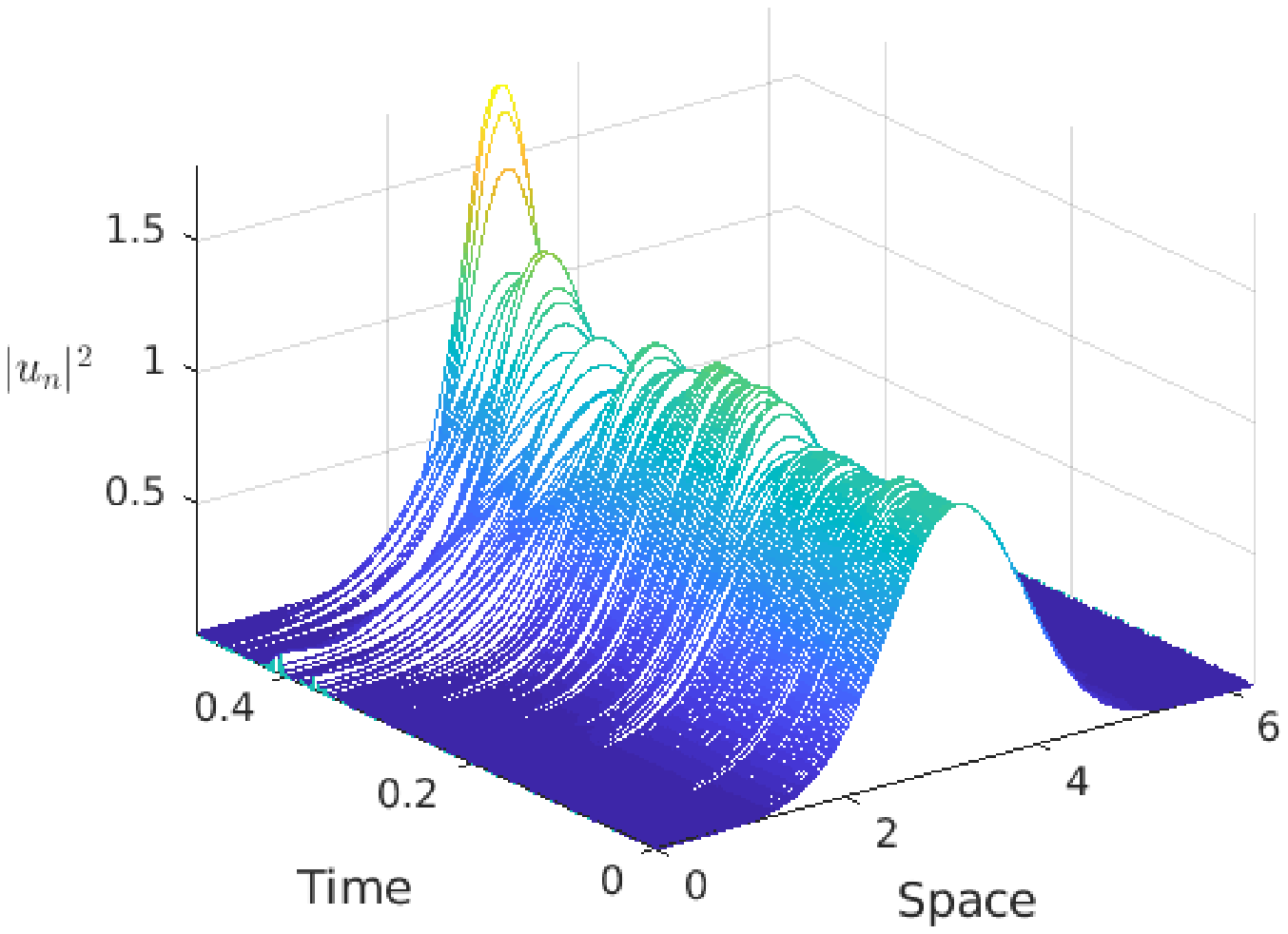}
\caption{Space-time evolution}
\end{subfigure}
\begin{subfigure}[b]{0.4\textwidth}
\includegraphics[height=5cm,keepaspectratio]{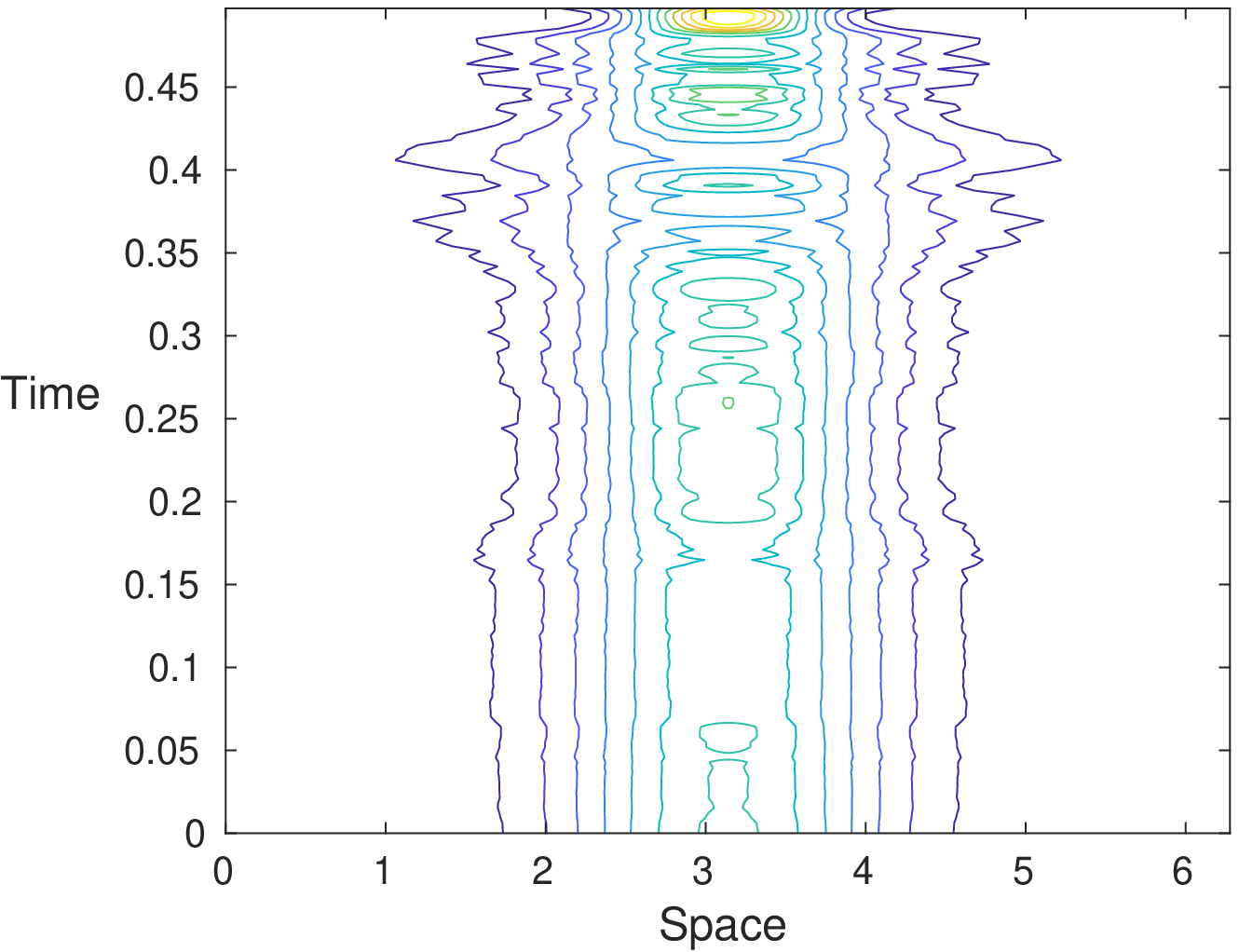}
\caption{Contour plot}
\end{subfigure}
\caption{Space-time evolution and contour plot for the splitting integrator \eqref{split}. 
The discretization parameters are $\tau=2^{-14}$ and $M=2^{10}$ Fourier modes.}
\label{fig:evo}
\end{figure}

\subsection{Conservation of the $\mathbf{L}^{\mathbf2}$-norm}
It is known that the $L^2$-norm of the solution to the SPDE \eqref{prob} 
remains constant for all times, see Proposition~\ref{prop-ex}. Figure~\ref{fig:massconvol} illustrates the corresponding
behavior of the above numerical integrators along one sample path. 
For this numerical experiment, we consider the parameters 
$\tau=2^{-8}$ and $M=2^{10}$ Fourier modes and the time interval $[0,1]$. 
Exact preservation of the $L^2$-norm for the splitting scheme is observed, as stated in Proposition~\ref{prop-norm2}, 
whereas a small drift is observed for the exponential integrator and the midpoint scheme. 

\begin{figure}
\begin{center}
\includegraphics*[height=6cm,keepaspectratio]{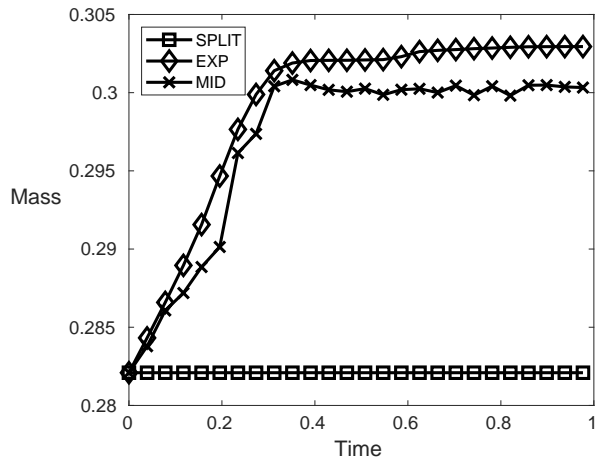}
\caption{Preservation of the $L^2$-norm: Splitting scheme ($\square$), 
exponential integrator ($\Diamond$), and midpoint scheme ($\times$).}
\label{fig:massconvol}
\end{center}
\end{figure}

\subsection{Strong convergence}
We now illustrate the mean-square convergence of 
the splitting scheme \eqref{split} stated in Theorem~\ref{th-ms}.
$M=2^{10}$ Fourier modes are used 
for the spatial discretization. 
The mean-square errors $\E[\norm{u_{\text{ref}}(x,T_{\text{end}})-u_N(x)}_{H^1}^2]^{1/2}$
at time $T_{\text{end}}=1$ are displayed in Figure~\ref{fig:params} 
for various values of the time step $\tau=2^{-\ell}$ for $\ell=10,\ldots,16$. 
Here, we simulate the reference solution $u_{\text{ref}}(x,t)$ 
with the splitting scheme, with a small time step $\tau_{\text{ref}}=2^{-18}$. 
The expected values are approximated by computing averages 
over $M_s=100$ samples. 
In Figure~\ref{fig:params}, we observe convergence of 
order $1$ for all time integrators. Note that, the strong order of convergence of 
the exponential scheme and midpoint integrator are not known in the case 
of the considered nonlocal interaction potential, 
whereas Figure~\ref{fig:params} illustrates our main result Theorem~\ref{th-ms} for the splitting scheme. 

\begin{figure}
\centering
\includegraphics[height=5cm,keepaspectratio]{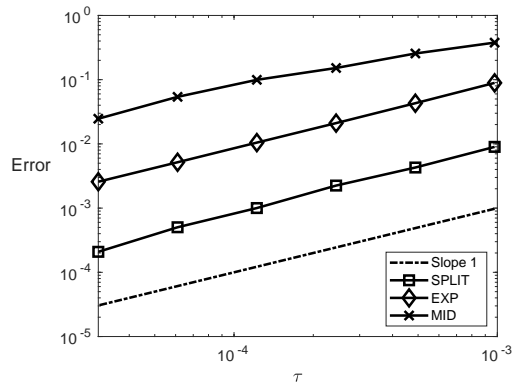}
\caption{Mean-square errors as a function of the time step: Splitting scheme ($\square$), 
exponential integrator ($\Diamond$), and midpoint scheme ($\times$). 
The dotted line has slope $1$.}
\label{fig:params}
\end{figure}

\section{Acknowledgements}
The work of CEB was partially supported by the SIMALIN project ANR-19-CE40-0016 of the French National Research Agency.
The work of DC was partially supported by the Swedish Research Council (VR) (projects nr. $2018-04443$). 
The computations were performed on resources provided by the Swedish National Infrastructure 
for Computing (SNIC) at HPC2N, Ume{\aa} University and at 
Chalmers Centre for Computational Science and Engineering. 


\end{document}